\newtheorem{theorem}{Theorem}[section]
\newtheorem{proposition}{Proposition}[section]
\newtheorem{corollary}{Corollary}[section]
\newtheorem{definition}{Definition}[section]
\newtheorem{conjecture}{Conjecture}
\newtheorem{lemma}{Lemma}[section]
\theoremstyle{remark}
\newtheorem{remark}{Remark}
\newcommand{\n}{\nabla}
\newcommand{\fr}{\frac}
\newcommand{\lf}{\left}
\newcommand{\rg}{\right}
\newcommand{\al}{\alpha}
\newcommand{\real}{\mathbb{R}}
\begin{document}

\title[Generalizing axioms of $r$-planes and $r$-spheres] {Generalizing axioms of $r$-planes and $r$-spheres on Riemannian and K\"ahler manifolds}
\author{Cristina Levina}
\address{Departamento de Matem\'atica e Estat\'\i stica, Av. Pasteur, 458, Urca, Rio de Janeiro, CEP 22290-240, Brasil} \email{cristina.marques@uniriotec.br}
\author{S\'ergio Mendon\c ca}
\address{Departamento de An\'alise, Instituto de Matem\'atica,
Universidade Federal Fluminense, Niter\'oi, RJ, CEP 24020-140,
Brasil} \email{sergiomendonca@id.uff.br}

\thanks{This work was partially supported by CNPq, Brasil.}
\date{}

\subjclass[2000]{Primary 53C42; Secondary 53C15}

\keywords{Einstein manifold, K\"ahler manifold, special immersion}

\begin{abstract}The famous theorems of Cartan, related to the axiom of $r$-planes, and Leung-Nomizu about the 
axiom of $r$-spheres were extended to K\"ahler geometry by several authors. 
 In this paper we replace the strong notions of totally geodesic submanifolds
($r$-planes) and extrinsic spheres ($r$-spheres) by a wider class of special isometric immersions such that theorems of type \lq\lq axioms of $r$-special
submanifolds\rq\rq\ could hold. We verify also that there are plenty of special submanifolds
in real and complex space forms and, in the codimension one case, in
Einstein manifolds. In the proof of our theorem in the complex case, a new class of K\"ahler manifolds arose naturally, which
we named $XY$-manifolds. They satisfy the symmetry property
$\lf<\bar R(X,JX)Y,JX\rg>=\lf<\bar R(Y,JY)X,JY\rg>$, where $J$ is the almost complex structure, 
$\bar R$ is the curvature tensor and $X, JX,Y,JY$ are orthonormal tangent
vectors.
\end{abstract}

\maketitle

\section{Introduction}

An $m$-dimensional Riemannian manifold $M$ is said to satisfy the
axiom of $r$-planes if there exists $2\le r\le m-1$ such that, for
any $p\in M$ and any $r$-dimensional linear subspace $W$ of the
tangent space $T_pM$, there exists a totally geodesic submanifold $S$
with $T_pS=W$. Cartan proved that any manifold satisfying this axiom
has constant sectional curvature (\cite{Ca}).

A umbilical submanifold of a Riemannian manifold $M$ is called an
extrinsic sphere when it has parallel mean curvature vector. One
says that an $m$-dimensional Riemannian manifold $M$ satisfies the
axiom of $r$-spheres if there exists $2\le r\le m-1$ such that for
any $p\in M$ and any $r$-dimensional linear subspace $W$ of $T_pM$
there exists an extrinsic sphere $S$ with $T_pS=W$. Leung and Nomizu
extended Cartan's theorem proving that if $M$ satisfies the axiom of
$r$-spheres then it has constant sectional curvature (\cite{LN}).

A Hermitian manifold $M$ of real dimension $2m$ is said to satisfy the axiom of holomorphic 
$2r$-spheres if there exists $1\le r\le m-1$ such that for
any $p\in M$ and any $2r$-dimensional holomorphic subspace $W$ of $T_pM$
there exists an extrinsic sphere $S$ satisfying $T_pS=W$. Similarly, 
$M$ satisfies  the axiom of antiholomorphic
$r$-spheres if there exists $2\le r\le m-1$ such that for
any $p\in M$ and any $r$-dimensional antiholomorphic subspace $W$ of $T_pM$
there exists an extrinsic sphere $S$ satisfying $T_pS=W$. If $M$ is a 
K\"ahler manifold, any of these two 
hypotheses imply that $M$ has constant holomorphic curvature (\cite{co}, \cite{g}, \cite{gm}, \cite{h}, \cite{my}).

This paper was motivated by the following question: what could
be the most general extension of the notions of totally geodesic
submanifolds and extrinsic spheres so that results similar to the
above theorems would still hold? 
First one could ask if some theorem envolving an axiom of
$r$-minimal submanifolds could be true. However, we expect that the answer
for this question is negative. We even conjecture that, given
any point $p$ in a Riemannian manifold $M$, any linear subspace
$W\subset T_pM$ and any vector $v$ orthogonal to $W$, there exists a 
submanifold $S$ tangent to $W$ at $p$ having parallel mean curvature vector $H$  
with $H(p)=v$. In the case $v=0$ this could be called an \lq\lq
infinitesimal Dirichlet problem\rq\rq.

We will define a class of isometric immersions which
contains the totally geodesic submanifolds and the 
extrinsic spheres.
 Such a class satisfies a theorem of type \lq\lq
axiom of $r$-special immersions\rq\rq. 

We first fix some notations and definitions. Fix an isometric immersion $f: S\to M$
and $p\in S$. By $(T_pS)^\perp$ we denote the orthogonal complement of $T_pS$ 
relatively to $T_pM$. If $Z\in T_pM$ we denote by $Z^\perp$ the projection 
of $Z$ onto $(T_pS)^\perp$. If $\bar \n$ is the 
Levi-Civita connection of $M$, $X$ is a vector field on $S$ and $\eta$ is 
a vector field on $M$ which is orthogonal to $S$, we set $\n^\perp_X\eta=(\bar\n_X\eta)^\perp$. 
Let $\al$ denote the second fundamental form of $f$, namely, we write $\al(X,Y)=(\bar\n_XY)^\perp$, where $X, Y$ are vector fields on $S$.   
We recall that an orthonormal frame $(X_i)$ on a neighborhood of $p$
in $S$ is said to be geodesic at $p$
if $\lf(\n_{X_i}X_j\rg)(p)=0$
for all $i,j$, where $\n$ is the Levi-Civita connection of $S$.

\begin{definition} Let $f:S\to M$ be an isometric immersion. We say that $f$ is special if, for any $p\in S$ and any orthonormal basis $v_1,\cdots,v_r$ of $T_pS$, it holds that
\begin{equation}\label{R}\sum_i\bar R(v_i,w)v_i \in T_pS,\end{equation}
for all $w\in T_pS$, where $\bar R(X, Y) =\bar \n_X\bar \n_Y-\bar \n_Y\bar \n_X-\bar \n_{[X,Y]}$ is the curvature tensor of $M$. 
\end{definition}
\begin{remark} We will see in Lemma \ref{independence} below that Equation (\ref{R}) does not depend 
on the choice of the orthonormal basis $v_1,\cdots,v_r$ of $T_pS$. 
\end{remark}

Now we present a stronger definition.
\begin{definition} \label{defvery}  Let $f:S\to M$ be an isometric immersion where $S$ is an $r$-dimensional manifold.
We say that $f$ is very special if for any $p\in S$ and any vectors $v, w\in T_pS$, it holds that
\begin{equation}\label{veryR}\bar R(v,w)v \in T_pS.\end{equation} 
\end{definition}

\begin{remark} It follows from the definition that any immersed curve is very special. 
\end{remark}

\begin{remark} We will see in Proposition \ref{umbilical} below that, for dimensions greater than $1$, umbilical submanifolds are 
very special if, and only if, they are extrinsic spheres, hence the class of very special isometric immersions contains 
the extrinsic spheres. Note that this is false in the case that the dimension of $S$ is $1$, since 
in this situation all immersions are very special 
and umbilical, but not necessarily extrinsic spheres.
\end{remark}

\begin{definition} Given a Riemannian manifold $M$ of dimension $m\ge 3$ and $2\le r\le m-1$, we will say that 
$M$ satisfies the axiom of special (respectively, very special) $r$-submanifolds if, for 
any $p\in M$ and any $r$-dimensional linear subspace $W$ of $T_pM$, there exists a special (respectively, very special) 
submanifold $S$ satisfying $T_pS=W$. 
\end{definition}

\begin{definition}
 Let $M$ be a K\"ahler manifold of real dimension $2m,\ m\ge 2$. We define similar axioms in complex geometry:
\begin{enumerate}[(a)]
\item Given  
$1\le r\le m-1$, we say that $M$ satisfies the axiom of special holomorphic $2r$-submanifolds if, for 
any $p\in M$ and any $2r$-dimensional holomorphic linear subspace $W$ of $T_pM$, there exists a special 
submanifold $S$ satisfying $T_pS=W$;
\item Given $2\le r\le m$, we say that $M$ satisfies the 
axiom of special antiholomorphic $r$-submanifolds if, for 
any $p\in M$ and any $r$-dimensional antiholomorphic linear subspace $W$ of $T_pM$, there exists a special  
submanifold $S$ satisfying $T_pS=W$.
\end{enumerate}
\end{definition}

Our first result is the following extension of
the Theorems of Cartan and Leung-Nomizu.
\begin{theorem} \label{riemcase} Let $M$ be an $m$-dimensional Riemannian manifold with 
$m\ge 3$. Then it holds that:
\begin{enumerate}[(a)]
\item \label{realspecial}If $M$ satisfies the axiom of special $r$-submanifolds for some $2\le r\le m-1$, then $M$ has 
constant sectional curvature if $r\le m-2$ and $M$ is Einstein if $r=m-1$; 
\item If $M$ satisfies the axiom of very special $(m-1)$-submanifolds, then $M$ has 
constant sectional curvature.
\end{enumerate}
Reciprocly, 
any isometric immersion in a manifold of constant sectional curvature is very special 
and any hypersurface in an Einstein manifold is
special.
\end{theorem}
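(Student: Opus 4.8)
The plan is to prove Theorem \ref{riemcase} in four stages: first the rigidity direction for the "special" axiom, then the extra input needed for the "very special" axiom, and finally the two converse assertions, which are essentially computations using the definitions.

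For part \eqref{realspecial}, fix $p\in M$, a unit vector $w\in T_pM$, and a unit vector $u\perp w$. Choosing a linear subspace $W\subset T_pM$ of dimension $r$ that contains both $u$ and $w$, the axiom supplies a special submanifold $S$ with $T_pS=W$. Applying the defining relation \eqref{R} with any orthonormal basis $v_1,\dots,v_r$ of $W=T_pS$ gives $\sum_i \bar R(v_i,w)v_i\in T_pS$; by Lemma \ref{independence} (the basis-independence) we may read this as a statement about the Ricci-type operator $w\mapsto \sum_i \bar R(v_i,w)v_i$ restricted to $W$, i.e. the partial trace of the curvature over $W$ preserves $W$. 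The key algebraic observation is that, since $W$ can be rotated freely inside $T_pM$ while still containing a prescribed plane, one can take differences of two such relations for subspaces $W$ and $W'$ differing only in one basis vector, and thereby deduce that $\bar R(v,w)v$ has no $T_pS$-orthogonal component \emph{in the directions that lie outside $W$ but inside some admissible $W$}. When $r\le m-2$ there is enough room to conclude, by a standard polarization/first-Bianchi argument (exactly the linear-algebra step in Cartan's original proof), that the sectional curvature $\langle\bar R(x,y)y,x\rangle$ at $p$ is independent of the plane $\mathrm{span}\{x,y\}$, giving constant sectional curvature; when only $r=m-1$ is available, the subspace $W$ is a hyperplane and one can only extract that the full Ricci tensor is proportional to the metric at each point, i.e. $M$ is Einstein (and one invokes Schur if $m\ge 3$ to get a constant, which is built into the Einstein condition here).

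For part (b), the hypothesis is the stronger pointwise relation \eqref{veryR}: $\bar R(v,w)v\in T_pS$ for \emph{all} $v,w\in T_pS$, with $\dim S=m-1$. Fix a unit normal $\xi$ to $S$ at $p$; then $\langle \bar R(v,w)v,\xi\rangle=0$ for all $v,w\in T_pS$. Since $T_pS=\xi^\perp$ is a hyperplane, as $\xi$ ranges over all unit vectors in $T_pM$ (using the axiom at $p$ for every hyperplane $W$), this says $\langle\bar R(v,w)v,\xi\rangle=0$ whenever $v,w\perp\xi$. Polarizing in $v$ and using the symmetries of $\bar R$, one shows the sectional curvature of every plane not containing $\xi$ equals... and then, letting $\xi$ vary, that all sectional curvatures at $p$ agree; this is precisely the Leung--Nomizu conclusion. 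I expect \textbf{the main obstacle} to be the careful bookkeeping in this polarization: one must track which curvature components are controlled for which choices of hyperplane and assemble them, via the first Bianchi identity, into the full statement that $\langle\bar R(x,y)y,x\rangle$ is plane-independent — the same delicate point that makes the classical proofs nontrivial.

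Finally, the converses are direct. If $M$ has constant sectional curvature $c$, then $\bar R(X,Y)Z=c(\langle Y,Z\rangle X-\langle X,Z\rangle Y)$, so for any immersion and any $v,w\in T_pS$ we get $\bar R(v,w)v=c(\langle w,v\rangle v-\langle v,v\rangle w)$, which is visibly in $T_pS$; hence every isometric immersion is very special. If $M$ is Einstein and $S\subset M$ is a hypersurface, pick at $p$ an orthonormal basis $v_1,\dots,v_{m-1}$ of $T_pS$ and a unit normal $\xi$; then for $w\in T_pS$,
\begin{equation*}
\Big\langle \sum_i \bar R(v_i,w)v_i,\ \xi\Big\rangle \;=\; \mathrm{Ric}(w,\xi)-\langle \bar R(\xi,w)\xi,\xi\rangle \;=\; \mathrm{Ric}(w,\xi),
\end{equation*}
because $\langle\bar R(\xi,w)\xi,\xi\rangle=0$; and $\mathrm{Ric}(w,\xi)=0$ since the Einstein condition gives $\mathrm{Ric}=\lambda\langle\,,\,\rangle$ and $w\perp\xi$. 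Thus $\sum_i\bar R(v_i,w)v_i\in T_pS$, i.e. $S$ is special, completing the proof.
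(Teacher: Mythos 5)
Your converse computations (constant curvature implies every immersion is very special; Einstein implies every hypersurface is special) are correct and coincide with the paper's. The genuine gap is in the forward direction of (a) for $2\le r\le m-2$, which is the heart of the theorem. You correctly propose comparing the trace relation for one admissible subspace with the relation for a subspace obtained by exchanging a single basis vector, but what that comparison yields is not what you claim, and the conclusion you then invoke is misattributed. Concretely, fix orthonormal vectors $v=v_1,\ w=v_2,\ v_3,\dots,v_{r+1},\eta$ (this is exactly where $r\le m-2$ enters: one needs $\eta$ orthogonal to all of $v_1,\dots,v_{r+1}$). The axiom applied to $W=\mathrm{span}(v_1,\dots,v_r)$ gives $\sum_{i=1}^r\langle\bar R(v_i,w)v_i,\eta\rangle=0$, and applied to $W_j=\mathrm{span}(v_1,\dots,\hat v_j,\dots,v_r,v_{r+1})$ for $j\ne 2$ (so that $w\in W_j$ and $\eta\perp W_j$) it gives the analogous sum; subtracting yields the \emph{equality} $\langle\bar R(v_j,w)v_j,\eta\rangle=\langle\bar R(v_{r+1},w)v_{r+1},\eta\rangle$, not the vanishing of "the $T_pS$-orthogonal component in directions outside $W$" that you assert. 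The finish is also not "the linear-algebra step in Cartan's original proof": since the $j=2$ term vanishes ($w=v_2$) and all remaining terms equal the common value $\langle\bar R(v_1,w)v_1,\eta\rangle$, the first relation collapses to $(r-1)\langle\bar R(v,w)v,\eta\rangle=0$, hence $\langle\bar R(v,w)v,\eta\rangle=0$ for every orthonormal triple; only after this does one invoke the pointwise lemma (Lemma 1.9 in \cite{D}) and Schur. In Cartan's setting the pointwise relation is immediate from total geodesy, whereas here one must upgrade a trace condition to a pointwise one, and that upgrade is precisely the computation your sketch omits and misdescribes.

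Two smaller points. In (b) you correctly extract $\langle\bar R(v,w)v,\xi\rangle=0$ for all orthonormal $v,w\perp\xi$ and all unit $\xi$, but then stop at an unfinished sentence and declare the remaining polarization "the main obstacle"; in fact nothing delicate remains, since this is exactly the hypothesis of the same cited lemma, and Schur concludes. Similarly, for $r=m-1$ you only assert that the hyperplane axiom forces $\mathrm{Ric}$ proportional to the metric; the one-line derivation (take $v_m=\eta$, apply the axiom to $\eta^\perp\ni w$, and use $\langle\bar R(\eta,w)\eta,\eta\rangle=0$ to get $\mathrm{Ric}(w,\eta)=0$ for all orthonormal $w,\eta$) is the same computation you do write out for the converse, so it should be stated in the forward direction as well.
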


\begin{remark} Note that Einstein manifolds show that Item {\it\ref{realspecial}} in Theorem \ref{riemcase} may not be improved to
obtain constant sectional curvature in the case $r=m-1$.
\end{remark}

 Before stating our similar result in K\"ahler geometry (Theorem \ref{complextheorem} below) we will introduce the 
 class of $XY$-manifolds, which has an important tool in the proof of this theorem.  
\begin{definition} A K\"ahler manifold $M$ of real dimension $2m,\ m\ge 2,$ 
with almost complex structure $J$ is said to be a $XY$-manifold if for any 
local orthonormal vector fields $X, JX, Y, JY$ it holds that 
\begin{equation}\label{xyequation}\lf<\bar R(X, JX) Y, JX\rg>=\lf<\bar R(Y,JY)X,JY\rg>.\end{equation}
\end{definition}
It is well known that the assumption that $\lf<\bar R(X, JX) Y, JX\rg>=0$ is equivalent to the fact that $M$ has constant holomorphic 
sectional curvature (see Proposition \ref{equal0} below). Thus the class of $XY$-manifolds includes  
K\"ahler manifolds with constant holomorphic sectional curvature. We would like to propose the following
\begin{conjecture} There exist examples of  $XY$-manifolds with non-constant holomorphic sectional curvature. 
\end{conjecture}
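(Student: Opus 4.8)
The plan is to resolve the conjecture affirmatively by producing an explicit example: the Riemannian product $M = M_1 \times M_2$ of two complex space forms of constant holomorphic sectional curvatures $c_1$ and $c_2$ with $c_1 + c_2 = 0$ and $c_1 \neq 0$ --- concretely $M = \mathbb{CP}^n(c) \times \mathbb{CH}^m(-c)$ with $n, m \ge 1$ and $c > 0$ (if a compact example is wanted, one may replace $\mathbb{CH}^m(-c)$ by a compact quotient). Endow $M$ with the product K\"ahler structure $J = J_1 \oplus J_2$; then its curvature tensor is the direct sum $\bar R(A,B)C = R_1(A_1,B_1)C_1 + R_2(A_2,B_2)C_2$, where a subscript $i$ denotes the component in $T M_i$ and $R_i$ is the curvature tensor of $M_i$.

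First I would write down the standard curvature formula of a complex space form of holomorphic sectional curvature $c$ and extract from it two identities: for $X_i, Y_i \in T M_i$,
$$\langle R_i(X_i, J X_i) Y_i, J X_i\rangle = -\,c_i\, |X_i|^2 \langle X_i, Y_i\rangle, \qquad \langle R_i(X_i, J X_i) J X_i, X_i\rangle = c_i\, |X_i|^4$$
(up to the global sign convention, which must be matched with that of Proposition \ref{equal0}). Summing the first identity over the two factors, and using only the orthonormality relations $|X| = |Y| = 1$ and $\langle X, Y\rangle = 0$ --- which give $\langle X_2, Y_2\rangle = -\langle X_1, Y_1\rangle$ and $|X_1|^2 - |Y_1|^2 = -(|X_2|^2 - |Y_2|^2)$ --- one obtains, for every orthonormal quadruple $X, JX, Y, JY$ on $M$,
$$\langle \bar R(X, JX) Y, JX\rangle - \langle \bar R(Y, JY) X, JY\rangle = -(c_1 + c_2)\,\langle X_1, Y_1\rangle\,\big(|X_1|^2 - |Y_1|^2\big).$$
Hence the choice $c_1 + c_2 = 0$ makes the right-hand side vanish identically, so $M$ satisfies Equation (\ref{xyequation}) and is an $XY$-manifold. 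For the non-constancy of the holomorphic sectional curvature, the second identity shows that a unit vector $X = X_1 + X_2 \in T M$ has holomorphic sectional curvature $c_1|X_1|^4 + c_2|X_2|^4 = c\big(|X_1|^4 - |X_2|^4\big)$, which equals $c$ on $T M_1$ and $-c$ on $T M_2$; since $c \neq 0$ these differ (equivalently, $\langle \bar R(X,JX)Y,JX\rangle = -c\,\langle X_1,Y_1\rangle \not\equiv 0$, so Proposition \ref{equal0} applies).

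I do not expect a serious obstacle in this approach, since everything reduces to a direct computation; the steps that need care are (i) carrying out the difference identity above with $X$ and $Y$ each allowed to have nonzero components in both factors, not merely for quadruples adapted to the product splitting, and (ii) exhibiting one admissible quadruple with $\langle X_1, Y_1\rangle\,(|X_1|^2 - |Y_1|^2) \neq 0$, which confirms that the $XY$-condition genuinely forces $c_1 + c_2 = 0$ and hence that the family is not secretly of constant holomorphic sectional curvature. The only genuinely harder question, which this construction leaves open, is whether there exists an \emph{irreducible} (say, compact simply connected, or locally irreducible) $XY$-manifold with non-constant holomorphic sectional curvature; a natural candidate to test would be an irreducible Hermitian symmetric space of rank at least two, where the large isometry group makes Equation (\ref{xyequation}) plausible, but I would treat that as a separate problem.
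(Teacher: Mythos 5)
This statement is posed in the paper only as a conjecture; the authors give no proof, so there is nothing of theirs to compare your argument with --- what you propose would settle the conjecture affirmatively, not re-derive a result of the paper. I checked your construction and it is correct. The point you flag as (i) causes no trouble: in a complex space form of holomorphic sectional curvature $c_i$ the standard curvature formula gives, for \emph{arbitrary} vectors $X_i,Y_i$ (no unit length or orthogonality needed), $\lf<R_i(X_i,JX_i)Y_i,JX_i\rg>=-c_i|X_i|^2\lf<X_i,Y_i\rg>$ and $\lf<R_i(X_i,JX_i)JX_i,X_i\rg>=c_i|X_i|^4$; since the product curvature is the direct sum, your difference identity follows exactly as stated, using only $\lf<X_2,Y_2\rg>=-\lf<X_1,Y_1\rg>$ and $|X_2|^2-|Y_2|^2=-\bigl(|X_1|^2-|Y_1|^2\bigr)$. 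Hence $c_1+c_2=0$ makes $M=M_1\times M_2$ an $XY$-manifold, and with unit vectors $e\in TM_1$, $f\in TM_2$ the quadruple $X=\frac{1}{\sqrt2}(e+f)$, $Y=\frac{1}{\sqrt2}(e-f)$ gives $\lf<\bar R(X,JX)Y,JX\rg>=-\frac c2\neq 0$, so Proposition \ref{equal0} rules out constant holomorphic sectional curvature; for your point (ii), the quadruple $X=\frac{\sqrt3}{2}e+\frac12 f$, $Y=\frac12 e-\frac{\sqrt3}{2}f$ has $\lf<X_1,Y_1\rg>\bigl(|X_1|^2-|Y_1|^2\bigr)=\frac{\sqrt3}{8}\neq 0$, so the condition $c_1+c_2=0$ is indeed forced. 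As an independent sanity check, your example is consistent with the structural results of the paper: for any orthonormal quadruple one computes $\lf<\bar R(X,Y)X,JY\rg>=-(c_1+c_2)\lf<X_1,Y_1\rg>\lf<JX_1,Y_1\rg>=0$, confirming the third condition of Proposition \ref{xyequivalence}, and for the first quadruple above $4\lf<\bar R(Z,X)Z,Y\rg>=-\frac c2$ both for unit $Z$ tangent to $M_1$ and for unit $Z$ tangent to $M_2$, in agreement with the surprising $Z$-independence of Proposition \ref{ZJZproperty}.

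Two remarks, neither affecting correctness. First, your example is locally reducible (indeed locally symmetric), so the sharper question you raise --- whether a locally irreducible $XY$-manifold with non-constant holomorphic sectional curvature exists --- remains open and is arguably the interesting residue of the conjecture; you are right to treat it separately rather than claim it. Second, for the compact version you should cite the existence of compact complex-hyperbolic space forms (cocompact lattices in $\mathrm{PU}(m,1)$); the product of such a quotient with $\mathbb{CP}^n(c)$ is then a compact example, since both the $XY$-identity and the values of the holomorphic sectional curvature are local. Finally, fix one sign convention for $\bar R$ throughout the write-up; as you note, the choice only changes overall signs and affects neither the $XY$-identity nor the application of Proposition \ref{equal0}.
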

 
In K\"ahler geometry we present the following extension of the results in \cite{co}, \cite{g}, \cite{gm}, \cite{h}, \cite{my}.
\begin{theorem} \label{complextheorem}
 Let $M$ be a K\"ahler manifold of real 
dimension $2m,\ m\ge 2$. Then $M$ has constant holomorphic sectional curvature if 
one of the following conditions hold:
\begin{enumerate}[(a)]
\item $M$ satisfies the axiom of special holomorphic $2r$-submanifolds, for some $1\le r\le m-1$;
\item $M$ satisfies the axiom of special antiholomorphic $r$-submanifolds, for some $2\le r\le m$.
\end{enumerate}
Reciprocly, all complex and all totally real immersions in a K\"ahler manifold of constant holomorphic sectional curvature are very special. 
\end{theorem}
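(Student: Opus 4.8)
The plan is to settle the converse assertion first, as it is a short computation, and then the two implications.

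Recall that a K\"ahler manifold of constant holomorphic sectional curvature $c$ has curvature tensor
\[
\bar R(X,Y)Z=\frac{c}{4}\bigl(\langle Y,Z\rangle X-\langle X,Z\rangle Y+\langle JY,Z\rangle JX-\langle JX,Z\rangle JY+2\langle X,JY\rangle JZ\bigr).
\]
Let $f\colon S\to M$ be an isometric immersion which is either complex, so that $J(T_pS)=T_pS$ for every $p$, or totally real (antiholomorphic), so that $J(T_pS)\perp T_pS$. Fixing $v,w\in T_pS$, substituting $X=Z=v,\ Y=w$ in the formula above, and using $\langle Jv,v\rangle=0$, one obtains
\[
\bar R(v,w)v=\frac{c}{4}\bigl(\langle w,v\rangle v-|v|^{2}w+\langle Jw,v\rangle Jv+2\langle v,Jw\rangle Jv\bigr).
\]
In the complex case every vector on the right lies in $T_pS$; in the totally real case $\langle Jw,v\rangle=\langle v,Jw\rangle=0$, so the two $J$-terms vanish. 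In either case $\bar R(v,w)v\in T_pS$ for all $v,w\in T_pS$, which is exactly the condition in Definition \ref{defvery}, so $f$ is very special (and a fortiori special, since then each $\bar R(v_i,w)v_i$ lies in $T_pS$ and (\ref{R}) follows by summing).

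For the implications, fix $p\in M$. By Lemma \ref{independence} the hypothesis of case (a) (resp.\ (b)) says precisely that, for every $2r$-dimensional holomorphic (resp.\ $r$-dimensional antiholomorphic) subspace $W\subseteq T_pM$, the endomorphism $\rho_W(w):=\sum_i\bar R(v_i,w)v_i$ (a well-defined ``partial Ricci'' of $W$, by that lemma, where $(v_i)$ is any orthonormal basis of $W$) satisfies $\rho_W(w)\in W$ for all $w\in W$; equivalently $\langle\rho_W(w),u\rangle=0$ for every $u\in W^{\perp}$. The structural point I would exploit is that $\rho_W$ is additive over orthogonal decompositions of $W$ into holomorphic (resp.\ totally real) summands; hence one may fix an orthonormal configuration of vectors, let $W$ range over the subspaces of the prescribed type that contain a chosen piece of it, and subtract two instances of the relation to cancel the common part, isolating identities that involve only the ``small'' pieces. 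Carrying this out with one-dimensional (and, in case (b), two-dimensional) variable pieces, and rewriting through the first Bianchi identity and the K\"ahler identity $\bar R(\cdot,\cdot)\circ J=J\circ\bar R(\cdot,\cdot)$, one can reduce everything to sectional curvatures of orthonormal frames $\{X,JX,Y,JY\}$ and to the single extra component $\langle\bar R(X,JX)Y,JX\rangle$ (using $\langle\bar R(X,JX)JX,Y\rangle=-\langle\bar R(X,JX)Y,JX\rangle$, the skew-symmetry of $\bar R(X,JX)$). I expect this to produce, on the one hand, the $XY$-symmetry $\langle\bar R(X,JX)Y,JX\rangle=\langle\bar R(Y,JY)X,JY\rangle$ --- so that $M$ is an $XY$-manifold --- and, on the other, an independent Ricci-type relation among the same components; combining the two should force $\langle\bar R(X,JX)Y,JX\rangle=0$ for all orthonormal $X,JX,Y,JY$, whence $M$ has constant holomorphic sectional curvature by Proposition \ref{equal0}. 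The mechanism is already visible when $r=1$ in case (a): taking $W=\mathrm{span}(X,JX)$ and $w=X$ gives $\rho_W(X)=-\bar R(X,JX)JX\in W$, hence $\langle\bar R(X,JX)JX,Y\rangle=0$ and therefore $\langle\bar R(X,JX)Y,JX\rangle=0$ at once.

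The hard part will be the two extreme values of $r$, namely $r=m-1$ in case (a) and $r=m$ in case (b), where $W^{\perp}$ is too thin to hold a vector $u$ that remains orthogonal to all the subspaces one wants to slide, so the cancellation trick is not available. There I would instead use the $J$-invariance of the relevant complements: when $r=m$ in case (b) one has $W^{\perp}=JW$, and sliding a holomorphic line inside a two-real-dimensional complement turns the relevant component into a quadratic trigonometric polynomial that is forced to be constant, which already yields two algebraic identities for $\bar R$; when $r=m-1$ in case (a) one must analyse the trace relation directly on the four-real-dimensional complement. The second genuinely delicate issue is to check that the identities thus extracted really do pin down the vanishing of $\langle\bar R(X,JX)Y,JX\rangle$ --- and not merely of a weighted average of these quantities; this is precisely the step that consumes the $XY$-symmetry, which is why the class of $XY$-manifolds shows up naturally in the proof.
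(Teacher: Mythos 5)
Your converse argument is correct and complete, and it is in fact a more direct route than the paper's: you substitute the standard curvature tensor of a complex space form into $\bar R(v,w)v$ and read off tangency, whereas the paper deduces the needed component identities from Propositions \ref{xyequivalence}, \ref{equal0}, \ref{ZJZproperty} and Corollary \ref{drop} and then invokes linearity. Your $r=1$ instance of case (a) also coincides with the paper's argument.

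The forward implications, however, are only a plan, and the plan omits precisely the ingredient that carries the proof. The two-subspace subtraction you describe can indeed be carried out (it is what the paper does with the pairs $W,\Omega$), and it yields the $XY$-symmetry $\langle\bar R(X,JX)Y,JX\rangle=\langle\bar R(Y,JY)X,JY\rangle$ in case (a) for $r\ge 2$, and $\langle\bar R(X,Y)X,JY\rangle=0$ (hence the $XY$-property, via Proposition \ref{xyequivalence}) in case (b). But at the point where you write that an ``independent Ricci-type relation \dots should force'' the vanishing, you have not produced that relation, and this is the essential step: the $XY$-symmetry alone is strictly weaker than constancy of the holomorphic curvature (the paper even conjectures that non-trivial $XY$-manifolds exist), so nothing so far pins down $\langle\bar R(X,JX)Y,JX\rangle=0$. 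The missing statement is Proposition \ref{ZJZproperty}: on an $XY$-manifold with $m\ge 3$, for any unit $Z$ orthogonal to $X,JX,Y,JY$ one has $\langle\bar R(X,JX)Y,JX\rangle=4\langle\bar R(Z,X)Z,Y\rangle=4\langle\bar R(JZ,X)JZ,Y\rangle$; it is obtained by polarizing the $XY$-identity at $X,JX,(Y\pm Z)/\sqrt2,(JY\pm JZ)/\sqrt2$, replacing $Z$ by $JZ$, and using Bianchi. With it, each block $\langle\bar R(X_i,X)X_i,Y\rangle+\langle\bar R(JX_i,X)JX_i,Y\rangle$ occurring in the axiom relation equals $\tfrac12\langle\bar R(X,JX)Y,JX\rangle$, so the relation becomes a positive multiple of the single component and forces it to vanish; the same proposition together with Proposition \ref{xyequivalence} disposes of case (b) when $m\ge3$, while $m=2$ (so $r=2$) follows from one further instance of the axiom and Corollary \ref{drop}. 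Finally, the difficulty you anticipate at the extreme values $r=m-1$ in (a) and $r=m$ in (b) does not arise in this scheme: in (a) one always has $2r\le 2m-2$, so the holomorphic subspace can be placed inside $(\mathrm{span}(Y,JY))^\perp$ and tested against $\eta=Y$, and in (b) an antiholomorphic $W$ automatically has $JX,JY\in W^\perp$; so no separate ``sliding'' argument is needed, but the quantitative identity above is, and without it your argument stops short of the conclusion.
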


\section{Some examples} 

Since any curve is very special, if we take immersed curves $f_i:(a_i,b_i)\to M_i$, $i=1,\cdots, n$, 
then $F:(a_1,b_1)\times\cdots\times (a_n,b_n)\to M_1\times\cdots\times M_n$, given 
by $F(t_1,\cdots,t_n)=\bigl(f_1(t_1),\cdots,f_n(t_n)\bigr)$ is 
a very special immersion.
Now we list some other examples: any isometric immersion in a manifold of constant sectional curvature is very special, and each  hypersurface of an Einstein manifold is special (see Theorem \ref{riemcase} above); 
any complex, or totally real, immersion in a K\"ahler manifold with constant holomorphic 
curvature is very special (see Theorem \ref{complextheorem} above). 
Riemannian products of the above examples provide other special (or very special) immersions.

\section{\label {preliminaries}Preliminaries}

We begin this section proving the following lemma.

\begin{lemma} \label{equivalence}
Let $f:S\to M$ be an isometric immersion where $S$ is an $r$-dimensional manifold.
Then $f$ is special if, and
only if, for any
$p\in S$ and any local orthonormal frame $(X_i)$ on $S$ which is geodesic at $p$
we have:
\begin{equation}\label{definition} r\lf(\n_{X_j}^\perp H\rg)(p)=\lf(\sum_{i=1}^r\n_{X_i}^\perp
\al(X_i,X_j)\rg)(p), \text{\ for any\ }j\, ,\end{equation}
where $r$ is the dimension of $S$ and $H=\fr 1r\sum_{i=1}^r\al(X_i,X_i)$ is 
the mean curvature vector. 
\end{lemma}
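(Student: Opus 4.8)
The plan is to deduce the equivalence directly from the Codazzi equation of the immersion $f$. In the curvature convention adopted in the paper, the Codazzi equation reads
$$\lf(\bar R(X,Y)Z\rg)^\perp=\lf(\tilde\n_X\al\rg)(Y,Z)-\lf(\tilde\n_Y\al\rg)(X,Z),$$
for tangent vector fields $X,Y,Z$ on $S$, where $\lf(\tilde\n_X\al\rg)(Y,Z):=\n^\perp_X\al(Y,Z)-\al(\n_XY,Z)-\al(Y,\n_XZ)$; I would recall in one line that this comes from the Gauss and Weingarten formulas, being careful about the sign.

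Next I record the pointwise identity that carries the whole statement. Fix $p\in S$ and an orthonormal frame $(X_i)$ on a neighbourhood of $p$ in $S$ which is geodesic at $p$. Putting $X=X_i$, $Y=X_j$, $Z=X_i$ in the Codazzi equation and summing over $i$, and using that $\lf(\n_{X_i}X_j\rg)(p)=\lf(\n_{X_i}X_i\rg)(p)=0$ (so that the correction terms in $\tilde\n$ drop at $p$ and $\sum_i\al(X_i,X_i)=rH$), I obtain
$$\lf(\sum_i\bar R(X_i,X_j)X_i\rg)^\perp(p)=\lf(\sum_i\n^\perp_{X_i}\al(X_i,X_j)\rg)(p)-r\lf(\n^\perp_{X_j}H\rg)(p),$$
an identity valid for every such $p$ and every frame geodesic at $p$, regardless of whether $f$ is special. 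Hence Equation (\ref{definition}) holds for a given frame geodesic at $p$ precisely when $\sum_i\bar R(X_i(p),X_j(p))X_i(p)\in T_pS$ for every $j$.

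It then remains to pass between this and the defining condition (\ref{R}). If $f$ is special, then for any $p$ and any orthonormal frame $(X_i)$ geodesic at $p$ the vectors $X_i(p)$ form an orthonormal basis of $T_pS$, so $\sum_i\bar R(X_i(p),w)X_i(p)\in T_pS$ for every $w\in T_pS$, in particular for $w=X_j(p)$, and the displayed identity yields (\ref{definition}). Conversely, suppose (\ref{definition}) holds at every point for every geodesic frame, and fix $p$, an arbitrary orthonormal basis $v_1,\dots,v_r$ of $T_pS$, and $w\in T_pS$. Since $w\mapsto\sum_i\bar R(v_i,w)v_i$ is linear in $w$, it suffices to treat $w=v_j$ for each $j$. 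Extend $v_1,\dots,v_r$ to an orthonormal frame geodesic at $p$ --- for instance by parallel transporting the $v_i$ along the radial geodesics of $S$ issuing from $p$, a standard construction --- and apply the displayed identity together with (\ref{definition}) to get $\sum_i\bar R(v_i,v_j)v_i\in T_pS$; hence $\sum_i\bar R(v_i,w)v_i\in T_pS$ for all $w$, so $f$ is special. The computation is routine, and I do not expect a genuine obstacle; the only points worth a word of care are recording the Codazzi equation compatibly with the paper's curvature convention and the (standard) observation that every orthonormal basis of $T_pS$ is the value at $p$ of some orthonormal frame on $S$ geodesic at $p$.
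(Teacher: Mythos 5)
Your proposal is correct and follows essentially the same route as the paper: both apply the Codazzi equation to a frame geodesic at $p$ with $X=X_i$, $Y=X_j$, $Z=X_i$, sum over $i$ to get the pointwise identity relating $\bigl(\sum_i\bar R(X_i,X_j)X_i\bigr)^\perp$ to $\sum_i\n^\perp_{X_i}\al(X_i,X_j)-r\,\n^\perp_{X_j}H$, and pass between arbitrary orthonormal bases and geodesic frames by extension at $p$. Your explicit remark on linearity in $w$ and the extension of a given orthonormal basis to a geodesic frame just spells out what the paper does implicitly.
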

\begin{proof} For local vector fields $X,Y,Z$ on $S$ the Codazzi equation
says that
\begin{equation}(\bar
R(X,Y)Z)^\perp=(\nabla_X^\perp\alpha)(Y,Z)-(\nabla_Y^\perp\alpha)(X,Z),\end{equation}
where
$$(\nabla_X^\perp\alpha)(Y,Z)=\nabla_X^\perp\alpha(Y,Z)-\alpha(\n_XY,Z)-\al(Y,\n_XZ).$$

Fix $\eta\perp T_pS$ and some local vector field $N$ orthogonal to $S$ satisfying $N(p)=\eta$. Take orthonormal vectors $v_1,\cdots,v_r\in T_pS$ and some extension of them to a local orthonormal
frame $X_1,\cdots,X_r$ on $S$ which is geodesic at $p$. Fix $i,j$. At the point $p$ we have:
\begin{eqnarray*}\lf<\bar
R(v_i,v_j)v_i,\eta\rg>&=&\lf<\bar
R(X_i,X_j)X_i,N\rg>(p)\\ &=&\lf<(\nabla_{X_i}^\perp\alpha)(X_j,X_i)-(\nabla_{X_j}^\perp\alpha)(X_i,X_i),N\rg>(p)\\
&=&\lf<\nabla_{X_i}^\perp\alpha(X_j,X_i)-\nabla_{X_j}^\perp\alpha(X_i,X_i),N\rg>(p),
\end{eqnarray*}
where we used the fact that
$\lf(\n_{X_i}X_j\rg)(p)=\lf(\n_{X_i}X_i\rg)(p)=\lf(\n_{X_j}X_i\rg)(p)=0$. By summing up we obtain that
\begin{equation} \label{nabla}\lf<\sum_i\bar
R(v_i,v_j)v_i-\lf(\sum_i\nabla_{X_i}^\perp\alpha(X_j,X_i)\rg)(p)+r\lf(\n_{X_j}^\perp
H\rg)(p)\, ,\,\eta\rg>=0.\end{equation} From (\ref{nabla})
we obtain easily that $f$ is special if, and only if, $$r\lf(\n_{X_j}^\perp H\rg)(p)=
\lf(\sum_{i=1}^r\n_{X_i}^\perp
\al(X_i,X_j)\rg)(p).$$ Lemma \ref{equivalence} is proved.\end{proof}

\begin{lemma}\label{independence} Equation  {\rm(\ref{R})} does 
not depend on the choice of an orthonormal basis $v_1,\cdots,v_r$ of $T_pS$. Similarly 
 {\rm(\ref{definition})} does not depend on
the choice of a local orthonormal frame $X_1,\cdots,X_r$ which is geodesic at $p$. 
 \end{lemma}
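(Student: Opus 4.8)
The plan is to reduce the first assertion to a standard linear-algebra fact about the curvature operator and then derive the second assertion from it via the equivalence established in Lemma \ref{equivalence}. Fix $p\in S$. For each fixed $w\in T_pS$ consider the linear map $A_w\colon T_pM\to T_pM$, $A_w(v)=\bar R(v,w)v$; more convenient is the associated symmetric bilinear form, namely the Ricci-type operator $\mathrm{Ric}_W\colon T_pS\to T_pM$ defined by $\mathrm{Ric}_W(w)=\sum_i\bar R(v_i,w)v_i$ for an orthonormal basis $v_1,\dots,v_r$ of $W=T_pS$. The first claim is exactly that $\mathrm{Ric}_W$ does not depend on the chosen orthonormal basis. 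This is the familiar observation that a partial trace of a tensor is basis-independent: if $(v_i)$ and $(\tilde v_a)$ are two orthonormal bases of $W$, write $\tilde v_a=\sum_i O_{ai}v_i$ with $O=(O_{ai})$ orthogonal, so that
\begin{equation*}
\sum_a\bar R(\tilde v_a,w)\tilde v_a=\sum_a\sum_{i,k}O_{ai}O_{ak}\bar R(v_i,w)v_k=\sum_{i,k}\Bigl(\sum_a O_{ai}O_{ak}\Bigr)\bar R(v_i,w)v_k=\sum_{i,k}\delta_{ik}\bar R(v_i,w)v_k,
\end{equation*}
using $\sum_a O_{ai}O_{ak}=\delta_{ik}$; this last sum is $\sum_i\bar R(v_i,w)v_i$, proving the claim. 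In particular the membership condition $\mathrm{Ric}_W(w)\in T_pS$ for all $w\in T_pS$, which is Equation (\ref{R}), is unambiguous.

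For the second assertion, I would not re-examine the frame dependence of (\ref{definition}) directly, since the terms $\n^\perp_{X_j}H$ and $\sum_i\n^\perp_{X_i}\al(X_i,X_j)$ individually involve derivatives of the frame and are genuinely delicate to compare across two frames that are both geodesic at $p$. Instead, note that the computation in the proof of Lemma \ref{equivalence} shows that, for any local orthonormal frame $(X_i)$ geodesic at $p$ and any $\eta\perp T_pS$,
\begin{equation*}
\Bigl\langle\, r\bigl(\n^\perp_{X_j}H\bigr)(p)-\Bigl(\sum_i\n^\perp_{X_i}\al(X_i,X_j)\Bigr)(p)\,,\,\eta\,\Bigr\rangle=-\Bigl\langle\sum_i\bar R(v_i,v_j)v_i\,,\,\eta\Bigr\rangle,
\end{equation*}
where $v_i=X_i(p)$. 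The right-hand side is, by the first part of the lemma applied with $w=v_j$, independent of which orthonormal basis of $T_pS$ the $v_i$ form, and in particular the quantity $\bigl(\sum_i\n^\perp_{X_i}\al(X_i,X_j)\bigr)(p)-r\bigl(\n^\perp_{X_j}H\bigr)(p)$ equals the orthogonal projection onto $(T_pS)^\perp$ of $\sum_i\bar R(v_i,v_j)v_i$, hence depends only on $v_j$ and not on the extension of $v_1,\dots,v_r$ to a frame geodesic at $p$. Summing this identity against an arbitrary orthonormal basis $(v_j)$ of $T_pS$ in place of a single $v_j$ (or simply applying it for each index $j$) shows that the truth value of (\ref{definition}) is governed entirely by the basis-independent expression $\sum_i\bar R(v_i,v_j)v_i$, whence (\ref{definition}) does not depend on the choice of geodesic frame.

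The only real subtlety is the second part, and the key point that makes it painless is to pass through the pointwise curvature identity rather than comparing connection terms frame-by-frame: once one knows from the proof of Lemma \ref{equivalence} that the bracket of the ``defect'' vector with every normal $\eta$ is $-\langle\sum_i\bar R(v_i,v_j)v_i,\eta\rangle$, independence follows immediately from the first paragraph. I expect no essential obstacle beyond being careful that the frame in the computation of Lemma \ref{equivalence} was an \emph{arbitrary} geodesic frame extending an \emph{arbitrary} orthonormal basis $v_1,\dots,v_r$, which it was.
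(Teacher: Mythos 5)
Your proof is correct and follows essentially the same route as the paper: the first assertion is the basis-independence of a contraction (the paper phrases it as the invariance of the trace of the operator $A_{w\eta}(v)=\pi(\bar R(w,v)\eta)$, you do the equivalent orthogonal change-of-basis computation, which in fact gives the slightly stronger statement that the vector $\sum_i\bar R(v_i,w)v_i$ itself is basis-independent), and the second assertion is deduced from the identity (\ref{nabla}) in the proof of Lemma \ref{equivalence}, exactly as the paper does, only spelled out in more detail.
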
 
 \begin{proof} Fix an orthonormal basis $(v_i)$ of $T_pS$, $w\in T_pS$ and $\eta\in (T_pS)^\perp$. Now consider the linear map 
$A_{w\eta}:T_pS\to T_pS$ given by $A_{w\eta}(v)=\pi(\bar R(w,v)\eta)$, where $\pi:T_pM\to T_pS$ is 
the standard orthogonal projection.  For the trace ${\rm{tr}}(A_{w\eta})$ we 
have:
\begin{equation}\label{trace}{\rm{tr}}(A_{w\eta})=\sum_i\lf<\bar R(w,v_i)\eta,v_i\rg>=
\sum_i\lf<\bar R(v_i,w)v_i,\eta\rg>.
\end{equation}
From (\ref{trace}) we see that the choice of the orthonormal basis $(v_i)$ is irrelevant on (\ref{R}). From this 
and Lemma \ref{equivalence} we obtain that the choice of the orthonormal frame $(X_i)$ geodesic at $p$ is 
irrelevant in Equation (\ref{definition}). 
\end{proof}

{\rm By using a proof easier and similar to the proof of Lemma \ref{equivalence} we have the following}
\begin{lemma} \label{lemmavery} Let $f:S\to M$ be an isometric immersion. We have that $f$ is very special if, and only if, for any
$p\in S$ and any local orthonormal fields $X,Y$ on $S$ satisfying $\nabla_XY(p)=\nabla_YX(p)=\nabla_XX(p)=
\nabla_YY(p)=0$, 
we have:
\begin{equation}\label{very} \lf(\n_{Y}^\perp \al(X,X)\rg)(p)=\lf(\n_{X}^\perp
\al(X,Y)\rg)(p).
\end{equation}
\end{lemma}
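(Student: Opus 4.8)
The plan is to follow the proof of Lemma \ref{equivalence}, except that now the Codazzi equation is used on a single pair of directions instead of being contracted. Fix $p\in S$ and local orthonormal fields $X,Y$ on $S$ with $\n_XY(p)=\n_YX(p)=\n_XX(p)=\n_YY(p)=0$, and apply the Codazzi equation to the triple $X,Y,X$:
$$(\bar R(X,Y)X)^\perp=(\n^\perp_X\al)(Y,X)-(\n^\perp_Y\al)(X,X).$$
Expanding $(\n^\perp_X\al)(Y,X)=\n^\perp_X\al(Y,X)-\al(\n_XY,X)-\al(Y,\n_XX)$ and the analogous expression for $(\n^\perp_Y\al)(X,X)$, the hypothesis on $X,Y$ makes every correction term vanish at $p$, and using $\al(X,Y)=\al(Y,X)$ one obtains
$$\lf(\bar R(X,Y)X\rg)^\perp(p)=\lf(\n^\perp_X\al(X,Y)\rg)(p)-\lf(\n^\perp_Y\al(X,X)\rg)(p).$$
So, for a pair of fields as above, $\bar R(X(p),Y(p))X(p)\in T_pS$ holds if and only if Equation (\ref{very}) holds at $p$.

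With this identity the forward implication is immediate: if $f$ is very special, then by Definition \ref{defvery} the vector $\bar R(X(p),Y(p))X(p)$ lies in $T_pS$ for every admissible pair $X,Y$, so its normal component is $0$ and (\ref{very}) follows. For the converse, assume (\ref{very}) holds at every $p\in S$ for every such pair of fields. Given $p$ and orthonormal $v,w\in T_pS$, I would extend $\{v,w\}$ to a local orthonormal frame on $S$ that is geodesic at $p$ (by radial parallel transport, in $S$, of an orthonormal basis of $T_pS$ completing $v,w$); its first two members furnish fields $X,Y$ satisfying the hypothesis of the lemma, so the displayed formula yields $\bar R(v,w)v\in T_pS$.

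It then remains to pass from orthonormal pairs to arbitrary $v,w\in T_pS$, which is the only step that is not a bare computation, and it is elementary. Since $\bar R(v,w)v$ is linear in $w$ and, by the antisymmetry of $\bar R$ in its first two arguments, vanishes whenever $w$ is proportional to $v$, we may replace $w$ by its component $\tilde w$ orthogonal to $v$; and since $\bar R(v,w)v$ is homogeneous of degree two in $v$ and of degree one in $w$, for $v\neq 0$ and $\tilde w\neq 0$ we get $\bar R(v,w)v=|v|^2|\tilde w|\,\bar R\bigl(v/|v|,\,\tilde w/|\tilde w|\bigr)\bigl(v/|v|\bigr)\in T_pS$ by the orthonormal case, while $\bar R(v,w)v=0$ in the remaining cases. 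Hence $\bar R(v,w)v\in T_pS$ for all $v,w\in T_pS$, that is, $f$ is very special. No genuinely hard step is expected; this is consistent with the remark preceding the lemma that the proof is easier than that of Lemma \ref{equivalence}.
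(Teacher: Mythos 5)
Your proposal is correct and follows essentially the route the paper intends: the paper gives no detailed proof, saying only that it is ``easier and similar'' to Lemma \ref{equivalence}, and your single-pair Codazzi computation in a frame geodesic at $p$ is exactly that simplification. The extra care you take in reducing arbitrary $v,w$ to orthonormal pairs via antisymmetry and homogeneity of $\bar R(v,w)v$ is a valid (and worthwhile) filling-in of a step the paper leaves implicit.
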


\begin{proposition}\label{umbilical} Let $f:S\to M$ be a umbilical immersion and $r$ the dimension of $S$. 
If $r\ge 2$ then the  following conditions are equivalent:
\begin{enumerate} 
\item $f$ is special;
\item $f$ is very special;
\item $f$ is an extrinsic sphere.
\end{enumerate}
\end{proposition}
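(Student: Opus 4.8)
The plan is to prove the cycle of implications $(2)\Rightarrow(1)\Rightarrow(3)\Rightarrow(2)$, noting that umbilicity and the hypothesis $r\ge 2$ are needed only for the last two arrows. The implication $(2)\Rightarrow(1)$ holds for every isometric immersion: if $\bar R(v,w)v\in T_pS$ for all $v,w\in T_pS$, then for any orthonormal basis $v_1,\dots,v_r$ of $T_pS$ and any $w\in T_pS$ each summand $\bar R(v_i,w)v_i$ of (\ref{R}) lies in $T_pS$, hence so does the sum. So it remains to treat $(1)\Rightarrow(3)$ and $(3)\Rightarrow(2)$.

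For $(1)\Rightarrow(3)$ I would fix $p\in S$ and a local orthonormal frame $(X_i)$ on $S$ that is geodesic at $p$, and substitute the umbilicity identity $\alpha(X_i,X_j)=\langle X_i,X_j\rangle H=\delta_{ij}H$ — valid as an identity of normal vector fields on the neighborhood because $\langle X_i,X_j\rangle\equiv\delta_{ij}$ there — into the characterization (\ref{definition}) of Lemma \ref{equivalence}. Every term with $i\ne j$ in the sum on the right vanishes identically, so $\big(\sum_i\nabla^\perp_{X_i}\alpha(X_i,X_j)\big)(p)=(\nabla^\perp_{X_j}H)(p)$, and (\ref{definition}) reduces to $(r-1)(\nabla^\perp_{X_j}H)(p)=0$. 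Since $r\ge 2$ this gives $(\nabla^\perp_{X_j}H)(p)=0$; as $p$ is arbitrary and $X_j(p)$ can be chosen to be any unit vector of $T_pS$ (complete it to a frame geodesic at $p$), the mean curvature vector is parallel in the normal bundle, i.e. $f$ is an extrinsic sphere.

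For $(3)\Rightarrow(2)$ I would invoke Lemma \ref{lemmavery}: given $p$ and local orthonormal fields $X,Y$ on $S$ with $\nabla_XY(p)=\nabla_YX(p)=\nabla_XX(p)=\nabla_YY(p)=0$, umbilicity gives $\alpha(X,X)=H$ and $\alpha(X,Y)\equiv 0$, so the left-hand side of (\ref{very}) equals $(\nabla^\perp_Y H)(p)$, which vanishes because $f$ is an extrinsic sphere, while the right-hand side equals $\nabla^\perp_X(0)=0$. Thus (\ref{very}) holds and $f$ is very special.

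The computations are short, and I expect no serious obstacle. The only points requiring attention are: distinguishing the derivative of the normal vector field $\alpha(X_i,X_j)$ from the tensorial derivative $(\nabla^\perp\alpha)(X_i,X_j)$, which agree at $p$ precisely because the frame is geodesic there (this is exactly why Lemmas \ref{equivalence} and \ref{lemmavery} are stated with geodesic frames); and the two uses of $r\ge 2$ — to make the factor $r-1$ nonzero in $(1)\Rightarrow(3)$, and to be able to complete a given unit tangent vector to an orthonormal pair so that the vanishing of $\nabla^\perp H$ is obtained in every tangent direction. This is where $r\ge 2$ is indispensable, in accordance with the earlier remark that every $1$-dimensional immersion is very special yet need not be an extrinsic sphere.
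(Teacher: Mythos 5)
Your proof is correct and follows essentially the same route as the paper: (1)$\Rightarrow$(3) by substituting $\alpha(X_i,X_j)=\delta_{ij}H$ into the characterization (\ref{definition}) of Lemma \ref{equivalence} to get $(r-1)(\nabla^\perp_{X_j}H)(p)=0$, and (3)$\Rightarrow$(2) by noting both sides of (\ref{very}) in Lemma \ref{lemmavery} vanish, with the trivial (2)$\Rightarrow$(1) closing the cycle (which the paper leaves implicit). The only quibble is your remark that $r\ge2$ is also needed ``to complete a unit vector to an orthonormal pair'' for the vanishing of $\nabla^\perp H$ in every direction: that step only needs the frame member $X_j(p)=v$, so the sole essential use of $r\ge2$ in (1)$\Rightarrow$(3) is the nonvanishing of the factor $r-1$.
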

\begin{proof} Fix $p\in S$ and an orthonormal frame $(X_i)$ in a neighborhood of $p$  which 
is geodesic at $p$.  Since $f$ is umbilical we have that $\al(X,Y)=\lf<X,Y\rg>H$. Assume 
that $f$ is special and fix $j\in\{1,\cdots,r\}$. It follows from (\ref{definition}) that 
$$r\lf(\n_{X_j}^\perp H\rg)(p)\!=\!\lf(\sum_{i=1}^r\n_{X_i}^\perp
\al(X_i,X_j)\rg)(p)\!=\!\lf(\sum_{i=1}^r\n_{X_i}^\perp
(\delta_{ij}H)\rg)(p)\!=\!\lf(\n_{X_j}^\perp H\rg)(p).$$
Since $r\ge 2$ we have that $\n_{X_j}^\perp H=0$, hence $f$ is an extrinsic sphere. 

Now assume that $f$ is an extrinsic sphere. Fix $p\in S$ and local orthonormal fields $X,Y$ on $S$ satisfying $\nabla_XY(p)=\nabla_YX(p)=\nabla_XX(p)=
\nabla_YY(p)=0$. Then both sides in (\ref{very}) vanish, hence $f$ is very special. Proposition \ref 
{umbilical} is proved. 
\end{proof}

\section{The Riemannian case - proof of Theorem \ref{riemcase}}
We first assume that $M$ satisfies the axiom of special $r$-submanifolds for some 
$2\le r\le m-2$.  To show
that the sectional curvature is constant it is sufficient, by
Schur's Lemma (see for example \cite {S}, II p. 328), to prove that at any point
$p\in M$ the sectional curvature is constant for planes contained in
$T_pM$. To show this last fact it suffices to obtain that $\lf<\bar
R(v,w)v,\eta\rg>=0$ for all orthonormal vectors $v,w,\eta$ (see for
example Lemma 1.9 in \cite{D}).

So we fix $p\in M$ and orthonormal vectors $v,w,\eta\in T_pM$. Since $2\le r\le m-2$,
we can construct an orthonormal set $$\{v_1=v,\
v_2=w,\,v_3,\cdots,v_{r+1},\eta\}\subset T_pM.$$ 
By our hypothesis there exists a special submanifold $S_1$ containing $p$ such that
$$T_p(S_1)=\text{\rm span}(v_1,\cdots,v_r),$$ 
where $\text{\rm span}(v_1,\cdots,v_r)$ denotes the linear subspace
generated by $v_1,\cdots,v_r$. By the definition of a special submanifold we have that
\begin{equation}\label{basicsum}\sum_{i=1}^r\lf<\bar R(v_i,w)v_i,\eta\rg>=0.\end{equation}
Note that $\bar R(v_2,w)v_2=0$, since $w=v_2$. Fix $1\le j\le r$, $j\not=2$. By
hypothesis there exists a special submanifold $S_2$ containing $p$ such that
\begin{equation}T_p({S_2})=\text{\rm
span}(v_1,\cdots,\hat v_{j},\cdots,v_r,v_{r+1}),\end{equation}
where  \ $\hat v_j$ means that the vector $v_j$ is omitted. Since we have that $w=v_2\in \text{\rm
span}(v_1,\cdots,\hat v_{j},\cdots,v_r,v_{r+1})$, the definition of a special 
submanifold implies that
\begin{equation}\label{secondsum}\lf({\sum_{1\le i\le r,\, i\not=j}}\lf<\bar R(v_i,w)v_i,\eta\rg>\rg)+\lf<\bar R(v_{r+1},w)v_{r+1},\eta\rg>=0.\end{equation}
From (\ref{basicsum}) and (\ref{secondsum}) we obtain that $\lf<\bar R(v_j,w)v_j,\eta\rg>=\lf<\bar
R(v_{r+1},w)v_{r+1},\eta\rg>$, for all $1\le j\le r,\, j\not=2$. Thus we obtain that
\begin{equation}0=\lf<\sum_{i=1}^r\bar R(v_i,w)v_i,\eta\rg>=(r-1)\lf<\bar
R(v_1,w)v_1,\eta\rg>=(r-1)\lf<\bar
R(v,w)v,\eta\rg>,\end{equation} hence $\lf<\bar R(v,w)v,\eta\rg>=0$ and $M$ has constant sectional curvature.

Now we assume that $M$ satisfies the axiom of special $(m-1)$-submanifolds. To show that $M$ is
Einstein, it suffices to prove that, for any point $p\in M$ and any
orthonormal vectors $w,\eta\in T_pM$, the Ricci bilinear form satisfies
\begin{equation}\label{ricci}-{\rm Ric}(w,\eta)=\lf<\sum_{i=1}^m\bar
R(v_i,w)v_i,\eta\rg>=0,\end{equation} for some orthonormal basis $v_1,\cdots,v_m$
of $T_pM$. Without loss of generality we may assume that $v_m=\eta$,
hence $w\in\text{\rm span}(v_1,\cdots,v_{m-1})$. By hypothesis there
exists a special submanifold $S$ containing $p$ such that $T_pS=\text{\rm
span}(v_1,\cdots,v_{m-1})$, hence we obtain that
$\sum_{i=1}^{m-1}\bar R(v_i,w)v_i\in T_pS$, hence (\ref{ricci}) holds and $M$ is Einstein.

To finish the proof of Theorem \ref{riemcase} we need to show that 
every isometric immersion in a manifold $M$ of constant sectional curvature is very special,
and that every codimension one isometric immersion in an Einstein
manifold is special. 

First we consider the case 
that $M$ has constant sectional curvature $\rho$. Consider an isometric immersion 
$f:S\to M$. Fix $p\in S$. Given vectors $v, w\in T_pS$, 
we have that $\lf<\bar
R(v,w)v,\eta\rg>=\rho(\lf<v,\eta\rg>\lf<w,v\rg>-\lf<v,v\rg>\lf<w,\eta\rg>)=0$ for all $\eta\in (T_pS)^\perp$, hence $\bar R(v,w)v\in T_pS$. As a consequence $f$ is very special. 

Now we consider the case that $M$ is Einstein. Let $f:S\to M$ be a
codimension one isometric immersion. Fix $p\in S$ and $\eta\in(T_pS)^\perp, |\eta|=1$. Consider
orthonormal vectors $v_1,\cdots, v_{m-1}\in T_pS$. Since $M$ is Einstein we 
have that Ric$(v,w)=\rho\lf<v,w\rg>$ for some $\rho\in \real$. Set $v_m=\eta$.  So we have
$$\sum_{i=1}^{m-1}\lf<\bar
R(v_i,w)v_i,\eta\rg>=\sum_{i=1}^{m}\lf<\bar
R(v_i,w)v_i,\eta\rg>=-\rho\lf<w,\eta\rg>=0,$$ for all $w\in T_pS$. We conclude that $f$ is special.  Theorem \ref{riemcase} is proved.

\section{Properties of $XY$-manifolds}
The following two propositions will be needed in the proof of Theorem \ref{complextheorem} 
\begin{proposition}\label{xyequivalence} Let $M$ be a K\"ahler manifold of real dimension $2m,\ m\ge 2$, 
with almost complex structure $J$. Fix $p\in M$.
The following 
assertions are equivalent:
\begin{enumerate} [(a)]
\item \label{xy} For any orthonormal vectors $X, JX, Y, JY \in T_pM$ it holds that $$\lf<\bar R(X,JX)Y,JX\rg>=\lf<\bar R(Y,JY)X,JY\rg>;$$
\item \label{-xy} For any orthonormal vectors $X, JX, Y, JY \in T_pM$ it holds that $$\lf<\bar R(X,JX)Y,X\rg>=-\lf<\bar R(Y,JY)X,Y\rg>;$$
\item \label{xy0} For any orthonormal vectors $X, JX, Y, JY \in T_pM$ it holds that $$\lf<\bar R(X,Y)X,JY\rg>=0.$$
\end{enumerate}
\end{proposition}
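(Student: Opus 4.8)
The plan is to prove the two equivalences (a)$\,\Leftrightarrow\,$(b) and (b)$\,\Leftrightarrow\,$(c) separately, using only the standard curvature symmetries (skew-symmetry in each pair of slots and pair-exchange) together with the single K\"ahler identity coming from the parallelism of $J$, namely $\lf<\bar R(A,B)JC,JD\rg>=\lf<\bar R(A,B)C,D\rg>$ for all $A,B,C,D\in T_pM$. Call a quadruple $X,JX,Y,JY$ of orthonormal vectors \emph{admissible}, and for such a quadruple set $P=\lf<\bar R(X,Y)X,JX\rg>$, $Q=\lf<\bar R(X,Y)Y,JY\rg>$, $T=\lf<\bar R(X,Y)X,JY\rg>$. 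Pair-exchange and skew-symmetry give $\lf<\bar R(X,JX)Y,X\rg>=-P$ and $\lf<\bar R(Y,JY)X,Y\rg>=Q$, so assertion (b) is the equality $P=Q$ and assertion (c) is the equality $T=0$; the $J$-identity followed by skew-symmetry also yields $T=\lf<\bar R(X,Y)Y,JX\rg>$, which will be needed below.

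For (a)$\,\Leftrightarrow\,$(b): the point is that if $X,JX,Y,JY$ is admissible then so is $X,JX,JY,-Y$, since $-Y=J(JY)$. Assertion (b) for this last quadruple reads $\lf<\bar R(X,JX)JY,X\rg>=-\lf<\bar R(JY,-Y)X,JY\rg>=-\lf<\bar R(Y,JY)X,JY\rg>$, and the $J$-identity simplifies its left-hand side to $\lf<\bar R(X,JX)JY,X\rg>=\lf<\bar R(X,JX)J(JY),JX\rg>=-\lf<\bar R(X,JX)Y,JX\rg>$. Thus (b) for $X,JX,JY,-Y$ amounts to $\lf<\bar R(X,JX)Y,JX\rg>=\lf<\bar R(Y,JY)X,JY\rg>$, which is assertion (a) for $X,JX,Y,JY$. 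Since $X,JX,Y,JY\mapsto X,JX,JY,-Y$ is a bijection of the set of admissible quadruples, (a) holds for every admissible quadruple if and only if (b) does.

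For (b)$\,\Leftrightarrow\,$(c): the device is a ``$J$-compatible rotation''. Given $\psi\in\real$, set $c=\cos\psi$, $s=\sin\psi$, $X'=cX+sY$, $Y'=-sX+cY$; then $JX'=cJX+sJY$ and $JY'=-sJX+cJY$, the quadruple $X',JX',Y',JY'$ is again admissible, and crucially $\bar R(X',Y')=\bar R(X,Y)$ because $X'\wedge Y'=X\wedge Y$. Expanding bilinearly and using $T=\lf<\bar R(X,Y)X,JY\rg>=\lf<\bar R(X,Y)Y,JX\rg>$, one computes
\begin{align*}
\lf<\bar R(X',Y')X',JX'\rg>&=c^2P+2csT+s^2Q,\\
\lf<\bar R(X',Y')Y',JY'\rg>&=s^2P-2csT+c^2Q,\\
\lf<\bar R(X',Y')X',JY'\rg>&=(c^2-s^2)T+cs(Q-P).
\end{align*}
By the first paragraph, assertion (b) for $X',JX',Y',JY'$ says the first two lines coincide, i.e.\ $(c^2-s^2)(P-Q)+4csT=0$, and assertion (c) for $X',JX',Y',JY'$ says the third line vanishes, i.e.\ $(c^2-s^2)T+cs(Q-P)=0$. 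Hence if (b) holds for every admissible quadruple, then $P=Q$ (taking the un-rotated quadruple) and the first relation forces $4csT=0$ for all $\psi$, so $T=0$, i.e.\ (c); conversely, if (c) holds for every admissible quadruple, then $T=0$ and the third relation forces $cs(Q-P)=0$ for all $\psi$, so $P=Q$, i.e.\ (b).

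The only substantive idea here is the $J$-compatible rotation; I expect the main pitfall to avoid is trying to reach (c) by manipulating the first Bianchi identity among the fixed vectors $X,JX,Y,JY$, which produces only tautologies for the components that matter. Everything else is bookkeeping of signs and of the fact that the relabelled quadruple $X,JX,JY,-Y$ and the rotated quadruple $X',JX',Y',JY'$ are again admissible, so that the hypotheses apply to them.
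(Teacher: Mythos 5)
Your proof is correct and follows essentially the same route as the paper: the equivalence of (a) and (b) via the substitution $Y\mapsto JY$, and the equivalence of (b) and (c) via a rotation in the real plane spanned by $X$ and $Y$, which at $\psi=\pi/4$ is exactly the paper's substitution $A=\frac{X+Y}{\sqrt 2}$, $B=\frac{X-Y}{\sqrt 2}$. The one-parameter family is a harmless generalization; the computations and the identity $\lf<\bar R(X,Y)X,JY\rg>=\lf<\bar R(X,Y)Y,JX\rg>$ you rely on all check out.
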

\begin{proof} If we replace $Y$ by $JY$ in \ref{xy} we obtain \ref{-xy}. If we replace $Y$ by $-JY$ in \ref{-xy} we 
obtain \ref{xy}.

Now we will prove that \ref{-xy} and \ref{xy0} are equivalent. We set $A=\frac{X+Y}{\sqrt{2}}, B=\frac{X-Y}{\sqrt{2}}$, 
hence $X=\frac{A+B}{\sqrt{2}}, Y=\frac{A-B}{\sqrt{2}}$. Clearly $A, JA, B, JB$ are orthonormal if and only if $X, JX, Y, JY$ 
are orthonormal. We have:
\begin{eqnarray*}\label{proofequivalence}
4\lf<\bar R(X,Y)X,JY\rg>&=&\lf<\bar R(A+B,A-B)(A+B),\,
JA-JB\rg>\\
&=&
2\lf<\bar R(B,A)(A+B),\,
{JA-JB}\rg>\\ 
&=&2\lf\{\lf<\bar R(B,A)A,JA\rg>-\lf<\bar R(B,A)B,JB\rg>\rg\}\\
&=&2\lf\{\lf<\bar R(A,JA)B,A\rg>+\lf<\bar R(B,JB)A,B\rg>\rg\},
\end{eqnarray*}
where we used the fact that $\lf<\bar R(B,A)A,JB\rg>=\lf<\bar R(B,A)B,JA\rg>$. From the above equation it is clear that \ref{-xy} is equivalent to \ref{xy0}.
\end{proof} 
\begin{proposition} \label{ZJZproperty}Let $M$ be a $XY$-manifold of real dimension $2m$ with $m\ge 3$. Fix 
$p\in T_pM$. Take orthonormal vectors $X, JX, Y, JY, Z, JZ\in T_pM$.  Then it holds that
\begin{eqnarray*} \lf<\bar R(X, JX)Y,JX\rg>&=&2\lf<\bar R(Z,JZ)X,JY\rg>\\
&=&4\lf<\bar R(Z,X)Z,Y\rg>=
4\lf<\bar R(JZ,X)JZ,Y\rg>.
\end{eqnarray*} 
\end{proposition}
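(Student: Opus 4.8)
The plan is to argue at the fixed point $p$, record a reformulation of the hypothesis, prove one lemma, and then settle the three equalities in turn, the last being the real difficulty. Besides the usual curvature symmetries, the first Bianchi identity, and the K\"ahler identities $\bar R(JU,JV)=\bar R(U,V)$ and $\langle\bar R(U,V)JW,JW'\rangle=\langle\bar R(U,V)W,W'\rangle$, I will use the following consequence of the $XY$-hypothesis, which is Proposition \ref{xyequivalence}(c) after polarization: for every unit vector $u\in T_pM$ the symmetric bilinear form $S_u(a,b):=\langle\bar R(u,a)u,b\rangle$ is $J$-invariant on $\{u,Ju\}^{\perp}$, i.e.\ $S_u(Ja,Jb)=S_u(a,b)$ whenever $a,b\perp u,Ju$; with the K\"ahler identity this also yields $S_{Ju}(a,b)=S_u(a,b)$ for such $a,b$.

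The key lemma is: \emph{for orthonormal $u,Ju,a,Ja,b,Jb$ one has $\langle\bar R(u,Ju)a,Jb\rangle=2\langle\bar R(u,a)u,b\rangle=2S_u(a,b)$.} Indeed, apply the first Bianchi identity to $\bar R(u,Ju)(Ja)$ and take the inner product with $b$: the term $\langle\bar R(u,Ju)Ja,b\rangle$ equals $-\langle\bar R(u,Ju)a,Jb\rangle$ by K\"ahler $J$-invariance, while the remaining two terms, after using the K\"ahler identities and $S_{Ju}=S_u$, each equal $S_u(a,b)$. Applying the lemma with $(u,a,b)=(Z,X,Y)$ gives $\langle\bar R(Z,JZ)X,JY\rangle=2\langle\bar R(Z,X)Z,Y\rangle$, hence $2\langle\bar R(Z,JZ)X,JY\rangle=4\langle\bar R(Z,X)Z,Y\rangle$; and $\langle\bar R(JZ,X)JZ,Y\rangle=S_{JZ}(X,Y)=S_Z(X,Y)=\langle\bar R(Z,X)Z,Y\rangle$, which is the last equality $4\langle\bar R(Z,X)Z,Y\rangle=4\langle\bar R(JZ,X)JZ,Y\rangle$. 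Thus the whole statement reduces to proving $\langle\bar R(X,JX)Y,JX\rangle=4\langle\bar R(Z,X)Z,Y\rangle$.

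This remaining equality is the heart of the matter and the step I expect to be the main obstacle. By the K\"ahler symmetries $\langle\bar R(X,JX)Y,JX\rangle=\langle\bar R(X,JX)X,JY\rangle$, so this quantity depends only on the complex line $\mathbb{C}X$ and on $Y$, and, unlike the ones above, it is \emph{not} of the form $S_u(a,b)$ with $u$ orthogonal to $a,b$, so the lemma does not reach it directly. To bridge this I would apply the lemma to the holomorphic $2$-plane spanned by $u=(X+Z)/\sqrt2$ and $Ju=(JX+JZ)/\sqrt2$, with $a=Y$ and $b=(X-Z)/\sqrt2$ — every vector occurring is orthonormal precisely because $X,JX,Y,JY,Z,JZ$ are. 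Expanding $\bar R(u,Ju)=\frac12\bigl(\bar R(X,JX)+\bar R(X,JZ)+\bar R(Z,JX)+\bar R(Z,JZ)\bigr)$ and $\bar R(u,a)u$ in the same way, then re-applying the lemma to the pure holomorphic planes $\{X,JX\}$ and $\{Z,JZ\}$ and using the first Bianchi identity (with the K\"ahler identities) to evaluate the mixed terms $\langle\bar R(X,JZ)Y,JX\rangle$, $\langle\bar R(Z,JX)Y,JX\rangle$, etc., all the diagonal contributions cancel and one is left with a relation of the schematic shape
\[
\langle\bar R(X,JX)Y,JX\rangle-\langle\bar R(Z,JZ)Y,JZ\rangle=4\bigl(\langle\bar R(Z,X)Z,Y\rangle-\langle\bar R(X,Y)X,Z\rangle\bigr),
\]
together with its analogues under permutations of $X,Y,Z$. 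Feeding these, the symmetry $\langle\bar R(X,JX)Y,JX\rangle=\langle\bar R(Y,JY)X,JY\rangle$ of Proposition \ref{xyequivalence}(a), and a few further first-Bianchi relations among $\langle\bar R(X,Y)X,Z\rangle$, $\langle\bar R(Y,Z)Y,X\rangle$ and $\langle\bar R(Z,X)Z,Y\rangle$ into a linear system should force $\langle\bar R(X,JX)Y,JX\rangle=4\langle\bar R(Z,X)Z,Y\rangle$. The difficulty lies entirely in this last stage: organizing the expansion, tracking the mixed curvature terms, and verifying that the linear system has the single solution claimed; everything before it is routine use of the curvature symmetries. (For $m=3$ one can lighten the bookkeeping by first noting, via a short computation with the K\"ahler identity and the $XY$-hypothesis, that $\langle\bar R(Z,X)Z,Y\rangle$ is unchanged when $Z$ is replaced by any unit vector of $\mathbb{C}Z$, so it is a genuine invariant of $(X,Y)$.)
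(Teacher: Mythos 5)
Your preliminary reductions are sound: the lemma $\langle\bar R(u,Ju)a,Jb\rangle=2\langle\bar R(u,a)u,b\rangle$ for $a,b\perp u,Ju$ is correct (it is the Bianchi identity (\ref{Bianchi}) combined with the polarized form of Proposition \ref{xyequivalence}(c)), and it does give the second and third equalities, in essentially the same way the paper gets (\ref{ZZJZJZ}) and uses (\ref{Bianchi}). The gap is exactly where you place it, and it is not closed by the plan you describe. If one carries out your proposed expansion (the lemma with $u=(X+Z)/\sqrt2$, $a=Y$, $b=(X-Z)/\sqrt2$), the diagonal terms do cancel and one obtains precisely your schematic relation $\langle\bar R(X,JX)Y,JX\rangle-\langle\bar R(Z,JZ)Y,JZ\rangle=4\bigl(\langle\bar R(Z,X)Z,Y\rangle-\langle\bar R(X,Y)X,Z\rangle\bigr)$. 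But this relation, its permutations in $X,Y,Z$, the symmetry of Proposition \ref{xyequivalence}(a), and the first Bianchi identity do \emph{not} force the claimed equality: permuting the roles only shows that the quantity $\langle\bar R(X,JX)Y,JX\rangle-4\langle\bar R(Z,X)Z,Y\rangle$ takes the same value for all three labelings, not that this value is zero; and the first Bianchi identity gives no relation at all among $\langle\bar R(X,Y)X,Z\rangle$, $\langle\bar R(Y,Z)Y,X\rangle$, $\langle\bar R(Z,X)Z,Y\rangle$ (it only constrains components with four distinct arguments). So your linear system is rank-deficient: its general solution is $\langle\bar R(X,JX)Y,JX\rangle=4\langle\bar R(Z,X)Z,Y\rangle+c$ with $c$ undetermined. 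The structural reason is that your lemma, applied with $X$ and $Z$ mixed symmetrically into $u$ and $b$, can only ever produce such difference relations.

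What is missing is an \emph{absolute} relation, and the paper obtains it by returning to the defining identity (\ref{xyequation}) rather than the lemma: it applies (\ref{xyequation}) to the quadruple $X,\,JX,\,\frac{Y+Z}{\sqrt2},\,\frac{JY+JZ}{\sqrt2}$. Because that identity is linear in one holomorphic plane and cubic in the other, this mixed substitution is not symmetric in $Y$ and $Z$; after cancelling the pure terms with the $XY$-property and polarizing $Z\mapsto -Z$, it yields the absolute relation (\ref{almostdone}), $\langle\bar R(X,JX)Y,JX\rangle=2\langle\bar R(Y,JZ)X,JZ\rangle+\langle\bar R(Z,JZ)X,JY\rangle$; substituting $Z\mapsto JZ$ gives (\ref{decisive}), hence (\ref{ZZJZJZ}), and then (\ref{Bianchi}) closes the argument, giving $\langle\bar R(X,JX)Y,JX\rangle=4\langle\bar R(Y,Z)X,Z\rangle$. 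To complete your proof you need an input of this kind — some use of the $XY$-hypothesis in which the mixed vector occupies the cubic slot — since the ingredients you list (the orthonormal-argument lemma, permutations, Proposition \ref{xyequivalence}(a), and first Bianchi) determine only differences.
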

\begin{remark} It is very surprising that the right hand side of the above equation does not depend on the choice of the unit vector 
$Z$ in ${\rm span}(X,JX, Y, JY)^\perp$.
\end{remark}
\begin{proof}[Proof of Proposition {\rm\ref{ZJZproperty}}] We will apply the definition of $XY$-manifolds to the orthonormal vectors $X,JX,\frac{Y+Z}{\sqrt{2}}, \frac{JY+JZ}{\sqrt{2}}$, obtaining that
\begin{equation*}\lf<\bar R(X, JX)\,\frac{Y+Z}{\sqrt{2}},\,JX\rg>=
\lf<\bar R\lf(\frac{Y+Z}{\sqrt{2}}, \frac{JY+JZ}{\sqrt{2}}\rg)X,\frac{JY+JZ}{\sqrt{2}}\rg>,
\end{equation*}
hence 
\begin{eqnarray*}2\lf<\bar R(X, JX)\,(Y+Z),\,JX\rg>&=&\lf<\bar R(Y, JY)X,JY\rg>+\lf<\bar R(Y, JY)X,JZ\rg>\\
&+&\lf<\bar R(Y, JZ)X,JY\rg>+\lf<\bar R(Y, JZ)X,JZ\rg>\\
&+&\lf<\bar R(Z, JY)X,JY\rg>+\lf<\bar R(Z, JY)X,JZ\rg>\\
&+&\lf<\bar R(Z, JZ)X,JY\rg>+\lf<\bar R(Z, JZ)X,JZ\rg>.
\end{eqnarray*}
By using again that $M$ is a $XY$-manifold we may cancel the first and last terms on the right side with corresponding terms on the left side, obtaining:
\begin{eqnarray*}\lf<\bar R(X, JX)\,(Y+Z),\,JX\rg>&=&\lf<\bar R(Y, JY)X,JZ\rg>+\lf<\bar R(Y, JZ)X,JY\rg>\\
&+&\lf<\bar R(Y, JZ)X,JZ\rg>+\lf<\bar R(Z, JY)X,JY\rg>\\
&+&\lf<\bar R(Z, JY)X,JZ\rg>+\lf<\bar R(Z, JZ)X,JY\rg>.
\end{eqnarray*}
Now we replace $Z$ by $-Z$ obtaining a new equality, which we add to the above equation and divide by $2$. All 
terms where $Z$ appears just one time will be cancelled. Thus we obtain:
\begin{equation}\label{almostdone}\lf<\bar R(X, JX)Y,JX\rg>=2\lf<\bar R(Y, JZ)X,JZ\rg>+\lf<\bar R(Z, JZ)X,JY\rg>,
\end{equation}
where we also used the fact that $\lf<\bar R(Y, JZ)X,JZ\rg>=\lf<\bar R(Z, JY)X,JZ\rg>$. Now we change 
$Z$ by $JZ$, obtaining that
\begin{equation}\label{decisive}\lf<\bar R(X, JX)Y,JX\rg>=2\lf<\bar R(Y, Z)X,Z\rg>+\lf<\bar R(Z, JZ)X,JY\rg>.
\end{equation}
\end{proof}
By (\ref{almostdone}) and (\ref{decisive}) we obtain that
\begin{equation}\label{ZZJZJZ} \lf<\bar R(Y, JZ)X,JZ\rg>=\lf<\bar R(Y, Z)X,Z\rg>.
\end{equation}
By (\ref{almostdone}) and (\ref{ZZJZJZ}) we have that
\begin{equation}\label{finally}\lf<\bar R(X, JX)Y,JX\rg>=\lf<\bar R(Y, Z)X,Z\rg>+\lf<\bar R(Y, JZ)X,JZ\rg>+\lf<\bar R(Z, JZ)X,JY\rg>.
\end{equation} 
By the Bianchi equality it is well known that 
\begin{equation}\label{Bianchi}\lf<\bar R(Z, JZ)X,JY\rg>=\lf<\bar R(Y, Z)X,Z\rg>+\lf<\bar R(Y, JZ)X,JZ\rg>.
\end{equation}
Thus Proposition \ref{ZJZproperty} follows directly from (\ref{ZZJZJZ}), (\ref{finally}) and (\ref{Bianchi}).
The following proposition will not be used in the proof of Theorem \ref{complextheorem}. 
\begin{proposition}Let $M$ be a $XY$-manifold of real dimension $2m, m\ge 4$. Then it holds that
$$\lf<\bar R(X,Y)Z,W\rg>=0,$$
for any orthonormal vectors $X,JX,Y,JY,Z,JZ,W,JW\in T_pM$ and any $p\in M$.
\end{proposition}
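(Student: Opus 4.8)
The plan is to write $R(A,B,C,D)=\langle\bar R(A,B)C,D\rangle$, fix $p\in M$ together with orthonormal vectors $X,JX,Y,JY,Z,JZ,W,JW\in T_pM$ (which exist precisely because $m\ge 4$, so for $m<4$ there is nothing to prove), and then deduce $R(X,Y,Z,W)=0$ from a single polarized application of Proposition \ref{ZJZproperty} followed by purely formal manipulations with the symmetries of the curvature tensor. The $XY$-hypothesis will enter only through Proposition \ref{ZJZproperty}.

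First I would extract from Proposition \ref{ZJZproperty} and the remark following it the fact that for every unit vector $U$ orthogonal to $\mathrm{span}(X,JX,Y,JY)$ one has $R(U,X,U,Y)=\frac14\langle\bar R(X,JX)Y,JX\rangle$, and in particular that this quantity is the same for all such $U$. I would then apply this to $U=Z$, to $U=W$, and to $U=\frac1{\sqrt2}(Z+W)$; the orthonormality of $\frac1{\sqrt2}(Z+W),\frac1{\sqrt2}(JZ+JW),X,JX,Y,JY$ is a routine check and is where the extra complex line, i.e.\ the hypothesis $m\ge 4$, is used. Expanding $R\bigl(\frac1{\sqrt2}(Z+W),X,\frac1{\sqrt2}(Z+W),Y\bigr)$ by multilinearity and cancelling the two equal ``diagonal'' terms $R(Z,X,Z,Y)=R(W,X,W,Y)$ against it leaves the four-line relation $R(Z,X,W,Y)+R(W,X,Z,Y)=0$.

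Next I would promote this to an identity for $R$. Feeding the relation into the first Bianchi identity $R(Z,X,W,Y)+R(X,W,Z,Y)+R(W,Z,X,Y)=0$ and using $R(X,W,Z,Y)=-R(W,X,Z,Y)=R(Z,X,W,Y)$ gives $R(W,Z,X,Y)=-2R(Z,X,W,Y)$, and then the pair symmetry $R(X,Y,Z,W)=R(Z,W,X,Y)=-R(W,Z,X,Y)$ yields
\[
R(X,Y,Z,W)=2\,R(Z,X,W,Y).
\]
Since this identity holds for any four ``complex-orthogonal'' representatives, I may relabel $X,Y,Z,W$; applying it a second time with $(X,Y,Z,W)$ replaced by $(Z,X,W,Y)$ gives $R(Z,X,W,Y)=2R(W,Z,Y,X)$, hence $R(X,Y,Z,W)=4R(W,Z,Y,X)$. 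Finally, the plain antisymmetry and pair symmetry give $R(W,Z,Y,X)=R(Y,X,W,Z)=R(X,Y,Z,W)$, so $R(X,Y,Z,W)=4R(X,Y,Z,W)$ and therefore $R(X,Y,Z,W)=0$.

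I expect the only step that requires genuine care to be the last one: one must observe that the permutation underlying the identity $R(X,Y,Z,W)=2R(Z,X,W,Y)$ has the property that its square lies in the ``$+1$'' symmetry group of $R$, so that iterating the identity produces the collapse $1=4$ rather than a tautology. Everything else — the orthonormality bookkeeping and multilinear expansion of the first step, and the shuffling with Bianchi and the pair/antisymmetries — is routine, and no further appeal to the $XY$-condition is needed.
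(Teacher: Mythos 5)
Your proof is correct, and it reaches the conclusion by a route that differs from the paper's in its second half. Both arguments rest on the same key input, Proposition \ref{ZJZproperty} (the independence of the admissible unit vector $Z$), but they exploit it differently. The paper uses the constant function $g_{_{XY}}(U)=\lf<\bar R(U,JU)X,Y\rg>$ on the unit sphere of $({\rm span}(X,JX,Y,JY))^\perp$ and differentiates it at $Z$ in the direction $JW$; because $U$ appears together with $JU$, the K\"ahler identity $\lf<\bar R(JW,JZ)X,Y\rg>=\lf<\bar R(W,Z)X,Y\rg>$ makes the derivative equal to $2\lf<\bar R(W,Z)X,Y\rg>$, so the antisymmetric quantity vanishes in one stroke. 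You instead use the other constant from the same proposition, $U\mapsto\lf<\bar R(U,X)U,Y\rg>$, and polarize at $U=\frac{1}{\sqrt2}(Z+W)$; since $U$ occupies two slots symmetrically, this only yields the symmetric relation $\lf<\bar R(Z,X)W,Y\rg>+\lf<\bar R(W,X)Z,Y\rg>=0$, and you then need the first Bianchi identity plus the pair symmetry to convert it into $\lf<\bar R(X,Y)Z,W\rg>=2\lf<\bar R(Z,X)W,Y\rg>$, and finally the iteration trick (applying this identity to the relabelled quadruple and noting the relabelling preserves the orthonormality constraint) to force $\lf<\bar R(X,Y)Z,W\rg>=4\lf<\bar R(X,Y)Z,W\rg>=0$. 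I checked the polarization, the Bianchi step, the relabelling (the eight vectors are the same set, so Proposition \ref{ZJZproperty} still applies), and the final symmetry chain; all are sound. What each approach buys: the paper's is shorter and conceptually a one-line consequence of constancy, while yours is purely algebraic pointwise polarization, avoiding the (mild) differential-geometric step of differentiating along the sphere, at the cost of the extra curvature-symmetry bookkeeping.
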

\begin{proof} Fix orthonormal vectors $X,Y,Z,W\in T_pM$ spanning an antiholomorphic linear 
subspace of $T_pM$. Consider the 
function $g_{_{XY}}:D\to \real$, given by $g_{_{XY}}(U)=\lf<\bar R(U,JU)X,Y\rg>$, where $D$ is 
the set of unit vectors in $T_pM$  orthogonal to the vectors $X, JX, Y, JY$. By Proposition \ref{ZJZproperty}, 
the function $g_{_{XY}}$ is constant on 
its domain, hence we have that 
$$0=(dg_{_{XY}})_{_Z}(JW)=2\lf<\bar R(W,Z)X,Y\rg>,$$
and thus the proof is complete.
\end{proof}
\begin{remark} One could ask if manifolds satisfying the antisymmetry property
$$\lf<\bar R(X,JX)Y,JX\rg>=-\lf<\bar R(Y,JY)X,JY\rg>,$$ 
for any orthonormal tangent vectors $X, JX, Y, JY$ in $T_pM$ and any $p\in M$, could give us another 
interesting class of K\"ahler manifolds. However, this class agrees with the manifolds of constant holomorphic 
sectional curvature when the real dimension is at least $6$. Indeed, following the same idea as in \cite{gm} 
we set $X=\frac{A+B}{\sqrt{2}}$, $Y=\frac{A-B}{\sqrt{2}}$ and apply the above antisymmetry obtaining that $\lf<\bar R(X,JX)JX,X\rg>=\lf<\bar R(Y,JY)JY,Y\rg>$ for 
any orthonormal vectors $X,JX, Y,JY$, which implies that the holomorphic sectional curvature is constant under this
dimension condition.
\end{remark}

\section{Proof of Theorem \ref{complextheorem}}
Consider a K\"ahler manifold $M$ of real dimension $2m$, $m\ge 2$. 
If we consider the function $h:S^{m-1}\to \Bbb R$ given by 
$h(X)=\lf<\bar R(X, JX) JX, X\rg>$, where $S^{m-1}$ is the unit sphere on $T_pM$, the 
derivative $dh_{_X}Y=-4\lf<\bar R(X, JX) Y, JX\rg>$, hence $h$ is constant if, and only if, 
$\lf<\bar R(X, JX)Y, JX\rg>=0$ for all $Y$ orthogonal to $X$. 
Since $T_X(S^{m-1})=\mbox{span}(JX)\oplus (\mbox{span}(X, JX))^\perp$, by using 
the K\"ahlerian analogue of Schur's Lemma it 
is easy to obtain the following well known result (see \cite{gm}).
\begin{proposition} \label{equal0} Let $M$ be a K\"ahler manifold of real dimension $2m, m\ge 2$. Then 
$M$ has constant holomorphic sectional curvature if, and only if, for any $p\in M$ and any orthonormal vectors $X, JX, Y, JY\in T_pM$ 
it holds that
\begin{equation} \lf<\bar R(X,JX)Y,JX\rg>=0.
\end{equation}
\end{proposition}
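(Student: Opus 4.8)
The plan is to read off both implications directly from the derivative formula $dh_{_X}Y=-4\lf<\bar R(X, JX) Y, JX\rg>$ established just above, and then to invoke the K\"ahlerian analogue of Schur's Lemma to pass from pointwise to global constancy. First I would dispose of the easy implication. Assume $M$ has constant holomorphic sectional curvature, so that $h$ is a constant function on the unit sphere $S^{m-1}\subset T_pM$ for every $p$, whence $dh_{_X}\equiv 0$. Given orthonormal vectors $X, JX, Y, JY$, the vector $Y$ is orthogonal to $X$ and so lies in $T_X(S^{m-1})$; the derivative formula then gives $\lf<\bar R(X,JX)Y,JX\rg>=-\fr14\, dh_{_X}Y=0$, which is exactly the asserted equation.

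For the converse I would argue in two stages. Suppose the displayed equation holds at every point $p$, and fix a unit vector $X$. Using the decomposition $T_X(S^{m-1})=\mbox{span}(JX)\oplus(\mbox{span}(X,JX))^\perp$, I would check that $dh_{_X}$ annihilates every tangent direction to the sphere: in the $JX$ direction the pairing $\lf<\bar R(X,JX)JX,JX\rg>$ vanishes because the curvature operator $\bar R(X,JX)$ is skew-symmetric, while in the directions $Y$ orthogonal to $X, JX$ it vanishes by hypothesis. Hence $dh_{_X}\equiv 0$ for all $X$. Since $m\ge 2$, the sphere $S^{m-1}$ is connected, so $h$ is constant on it; that is, the holomorphic sectional curvature is constant at each point $p$, say equal to some $c(p)$.

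The second stage, which I expect to be the only real obstacle, is to promote the pointwise constant $c(p)$ to a genuine constant independent of $p$. For this I would invoke the K\"ahlerian Schur Lemma: once the holomorphic sectional curvature is pointwise constant, the curvature tensor $\bar R$ at $p$ is forced into the standard constant-holomorphic-curvature model with coefficient $c(p)$, and substituting this expression into the second Bianchi identity and contracting yields, for complex dimension $m\ge 2$, a relation forcing $dc=0$. As this is a classical fact, I would cite \cite{gm} and conclude that $c$ is a global constant, so that $M$ has constant holomorphic sectional curvature. This settles both directions.
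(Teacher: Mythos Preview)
Your proposal is correct and follows essentially the same approach as the paper: both use the derivative formula $dh_{_X}Y=-4\lf<\bar R(X,JX)Y,JX\rg>$ together with the decomposition $T_X(S^{m-1})=\mbox{span}(JX)\oplus(\mbox{span}(X,JX))^\perp$, and then appeal to the K\"ahlerian Schur Lemma (citing \cite{gm}). Your write-up merely makes explicit the details the paper leaves implicit, such as the skew-symmetry argument in the $JX$ direction and the connectedness of the sphere.
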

By replacing $Y$ by $JY$ in Proposition \ref{equal0} one obtains the following 
\begin{corollary} \label{drop} $M$ has constant holomorphic 
sectional curvature 
if, and only if, it holds that 
\begin{equation} \lf<\bar R(X,JX)Y,X\rg>=0,
\end{equation}
for any orthonormal vectors $X,JX,Y,JY\in T_pM$ and any $p\in M$.
\end{corollary}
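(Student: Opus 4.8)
The plan is to derive this directly from Proposition \ref{equal0}, using only the two standard K\"ahler facts that $J$ is a pointwise linear isometry (so that $\lf<JU,JV\rg>=\lf<U,V\rg>$) and that the curvature commutes with $J$, i.e. $\bar R(U,V)JW=J\bar R(U,V)W$. In other words, I will show that, at each $p\in M$, the curvature condition appearing in Corollary \ref{drop} is logically equivalent to the one in Proposition \ref{equal0}; the corollary then follows immediately, and in both directions, since Proposition \ref{equal0} is itself stated as an equivalence.

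First I would observe that for vectors $X,JX,Y,JY\in T_pM$ the quadruple $X,JX,Y,JY$ is orthonormal if and only if the quadruple $X,JX,JY,J(JY)=-Y$ is orthonormal. Hence the substitution $Y\mapsto JY$ is a bijection of the set of such admissible quadruples onto itself, and therefore the assertion \lq\lq$\lf<\bar R(X,JX)Y,JX\rg>=0$ for all $p$ and all orthonormal $X,JX,Y,JY\in T_pM$\rq\rq\ is equivalent to the assertion \lq\lq$\lf<\bar R(X,JX)JY,JX\rg>=0$ for all $p$ and all orthonormal $X,JX,Y,JY\in T_pM$\rq\rq.

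Next I would rewrite this last expression using the K\"ahler identities. Since $\bar R(X,JX)JY=J\bar R(X,JX)Y$ and $J$ preserves the inner product (applied to the second slot, writing $JX=J(X)$), one gets
$$\lf<\bar R(X,JX)JY,\,JX\rg>=\lf<J\bar R(X,JX)Y,\,J(X)\rg>=\lf<\bar R(X,JX)Y,\,X\rg>.$$
Consequently $\lf<\bar R(X,JX)Y,JX\rg>$ vanishes on every admissible quadruple if and only if $\lf<\bar R(X,JX)Y,X\rg>$ does, and combining this equivalence with Proposition \ref{equal0} proves Corollary \ref{drop}.

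I do not expect any genuine obstacle here; the argument is a two-step manipulation. The only points that deserve a moment's care are checking that the substitution $Y\mapsto JY$ really does permute the admissible quadruples $X,JX,Y,JY$ (so that the two universally quantified curvature conditions are genuinely equivalent, not merely one implying the other), and verifying that the two K\"ahler identities invoked above introduce no spurious sign, so that the two scalar quantities coincide exactly.
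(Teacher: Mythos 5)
Your proposal is correct and is essentially the paper's own argument: the paper proves Corollary \ref{drop} precisely by replacing $Y$ with $JY$ in Proposition \ref{equal0}, and your use of $\bar R(X,JX)JY=J\bar R(X,JX)Y$ together with the $J$-invariance of the metric just makes that one-line substitution explicit, with the signs and the permutation of admissible quadruples checked correctly.
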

\begin{proof}[Proof of Theorem {\rm\ref{complextheorem}}] Let $M$ be a K\"ahler manifold of 
dimension $2m, m\ge 2$. We first assume that $M$ satisfies the axiom of special holomorphic 
 $r$-submanifolds for some $1\le r\le m-1$. Fix $p\in M$ and orthonormal vectors $X, JX, Y, JY\in T_pM$. By Proposition
\ref{equal0} it suffices to show that $\lf<\bar R(X,JX)Y,JX\rg>=0$.

If $r=1$  by hypothesis there exists a special surface $S$ containing $p$ such that $T_pS={\rm span}(X, JX)$. By using the definition 
of a special surface we have that 
$$0=\lf<\bar R(X,X)X,Y\rg>+\lf<\bar R(JX,X)JX,Y\rg>=\lf<\bar R(X,JX)Y,JX\rg>,$$
hence the holomorphic sectional curvature of $M$ is constant.

Now we assume that $r\ge 2$, hence $m\ge 3$. There exist orthonormal vectors 
$$X_1=X,\, JX_1,\,X_2,\,JX_2,\cdots, X_r,\,JX_r\in ({\rm span}(Y, JY))^\perp.$$
Set $W={\rm span}(X_1,JX_1,\cdots,X_r,JX_r)$. There exists a special submanifold $S_1$ containing $p$ such that 
$T_p(S_1)=W$. Then we have that
\begin{equation}\label{complspecial}\lf<\bar R(JX, X)JX, Y\rg>+ \sum_{i=2}^r\Bigl(\lf<\bar R(X_i,X)X_i,Y\rg>+\lf<\bar R(JX_i,X)JX_i,Y\rg>\Bigr)=0.
\end{equation}
Now set $\Omega={\rm span}(X_2,JX_2,\cdots,X_r,JX_r,Y,JY)$. There exists a special submanifold $S_2$ containing $p$ such that $T_p(S_2)=\Omega$. Then we have that
\begin{equation}\label{complspecial2} \lf<\bar R(JY,Y)JY,X\rg>+\sum_{i=2}^r\lf(\Bigl<\bar R(X_i,Y)X_i,X\rg>+\lf<\bar R(JX_i,Y)JX_i,X\rg>\Bigr)=0.
\end{equation}
From (\ref{complspecial}) and (\ref{complspecial2}) we see that $\lf<\bar R(X,JX)Y,JX\rg>=\lf<\bar R(Y,JY)X,JY\rg>$, 
hence $M$ is a $XY$-manifold. By Proposition  \ref{ZJZproperty} we have that 
$$\lf<\bar R(JX, X)JX, Y\rg>=2\lf(\lf<\bar R(X_i,X)X_i,Y)\rg>+\lf<\bar R(JX_i,X)JX_i,Y\rg>\rg),$$
for all $2\le i\le r$. This fact together with (\ref{complspecial}) implies that $\lf<\bar R(JX, X)JX, Y\rg>=0$,
hence $M$ has constant holomorphic sectional curvature.

Now we will assume that $M$ satisfies the axiom of special antiholomorphic $r$-submanifolds for some $2\le r\le m$. Again 
we fix $p\in M$ and orthonormal vectors $X, JX, Y, JY\in T_pM$. We consider orthonormal vectors
$X_1=X, X_2=Y,\cdots, X_r$ spanning an antiholomorphic subspace $W$ of $T_pM$. There exists a special submanifold 
$S_1$ containing $p$  satisfying $T_p(S_1)=W$. Thus we have that:
\begin{equation}\label{anti1}\lf<\bar R(X,Y)X,JY\rg>+\sum_{i=2}^r\lf<\bar R(X_i,Y)X_i,JY\rg>=0,
\end{equation}
and
\begin{equation}\label{anti3}\lf<\bar R(X,Y)X,JX\rg>+\sum_{i=2}^r\lf<\bar R(X_i,Y)X_i,JX\rg>=0.
\end{equation}

Now set $\Omega={\rm span}(JX, X_2,\cdots,X_r)$. Let $S_2$ be a special submanifold satisfying 
$T_p(S_2)=\Omega$. We have:
\begin{equation}\label{anti2}\lf<\bar R(JX,Y)JX,JY\rg>+\sum_{i=2}^r\lf<\bar R(X_i,Y)X_i,JY\rg>=0.
\end{equation}
From (\ref{anti1}) and (\ref{anti2}) it follows that 
$$\lf<\bar R(X,Y)X,JY\rg>=\lf<\bar R(JX,Y)JX,JY\rg>=-\lf<\bar R(X,Y)X,JY\rg>,$$
hence $\lf<\bar R(X,Y)X,JY\rg>=0$. Since $X, Y$ are arbitrary we conclude by Proposition \ref{xyequivalence} that $M$ is 
a $XY$-manifold. 

If $m=2$ then $r=2$ and it follows immediately 
from (\ref{anti3}) that $\lf<\bar R(X,Y)X, JX\rg>=0$, hence Corollary \ref{drop} implies that 
$M$ has constant holomorphic sectional curvature. 
Now assume that $m\ge 3$. We apply Propositions \ref{ZJZproperty} and \ref{xyequivalence} to each term $\lf<\bar R(X_i,Y)X_i,JX\rg>$ in (\ref{anti3}), 
with $3\le i\le m$, obtaining that 
\begin{eqnarray*}4\lf<\bar R(X_i, Y)X_i,JX\rg>&=&\lf<\bar R(Y,JY)JX, JY\rg>\\
&=&\lf<\bar R(Y,JY)X,Y\rg>\\
&=&-\lf<\bar R(X,JX)Y,X\rg>\\
&=&\lf<\bar R(X,Y)X, JX\rg>.
\end{eqnarray*}
By this equality we obtain from (\ref{anti3}) that $\lf<\bar R(X,Y)X, JX\rg>=0$, hence we conclude again that $M$ has constant holomorphic sectional curvature. 

To conclude the proof of Theorem \ref{complextheorem} we need to prove that complex and totally real 
immersions in a K\"ahler manifold of constant holomorphic sectional curvature are very 
special. In fact, if $M$ has constant holomorphic sectional curvature and $X,JX,Y,JY$ are orthonormal vectors 
in $T_pM$ for some point $p\in M$, we use together Propositions \ref{xyequivalence},
\ref{equal0} and Corollary \ref{drop}, obtaining that
\begin{equation}\label{0xyprop}0=\lf<\bar R(X,JX)Y,JX\rg>=\lf<\bar R(X,JX)Y,X\rg>=\lf<\bar R(X,Y)X,JY\rg>.
\end{equation}
If furthermore $Z\in ({\rm span}(X,JX,Y,JY))^\perp$ we use Propositions \ref{ZJZproperty} and
\ref{equal0}, as well as Corollary \ref{drop} obtaining that  
\begin{equation}\label{0Z}0=\lf<\bar R(Z,X)Z,Y\rg>=\lf<\bar R(JZ,X)JZ,Y\rg>=\lf<\bar R(Z,JX)Z,Y\rg>.
\end{equation} 
Let $f:S\to M$ be a complex immersion in a K\"ahler manifold of constant holomorphic sectional curvature.  
Take a point 
$p\in S$ and fix $Z\in (T_pS)^\perp$ and $X\in T_pS$.  We have by (\ref{0xyprop}) 
that $\lf<\bar R(X,JX)X,Z\rg>=\lf<\bar R(JX,X)JX,Z\rg>=
0$. Thus by linearity we see that $S$ is very special if its real dimension is $2$.  
If the real dimension of $S$ is at least $4$, we fix $Y\in T_pS$ orthogonal to $X, JX$. Since $f$ is 
a complex immersion we have that  $Z\in ({\rm span}(X,JX,Y,JY))^\perp$, hence we obtain from  (\ref{0Z}) that  
$$\lf<\bar R(X,Y)X,Z\rg>=\lf<\bar R(JX,Y)JX,Z\rg>=\lf<\bar R(Y,X)Y,Z\rg>=\lf<\bar R(Y,JX)Y,Z\rg>=0.$$ By the linearity properties of the curvature tensor in a K\"ahler manifold we conclude that $f$ is very special.

Now let $f:S\to M$ be a totally real immersion, where $M$ has constant holomorphic sectional curvature. 
Take $p\in S$ and orthonormal vectors $X,Y\in T_pS$.  
By (\ref{0xyprop}) we have that $\lf<\bar R(X, Y)X, JX\rg>=0$ and $\lf<\bar R(X, Y)X, JY\rg>=0$. 
If  $Z\in (T_pS)^\perp$ is orthogonal to $X,JX,Y,JY$ we obtain from (\ref{0Z}) that $\lf<\bar R(X, Y)X, Z\rg>=0$. 
By interchanging $X$ and $Y$ we obtain similar equations. Thus 
we may use the linearity properties of the curvature tensor in a K\"ahler manifold to conclude that $f$ is very special.

Theorem \ref{complextheorem} is proved. 
\end{proof}

\end{document}